\DeclareMathAlphabet{\pazocal}{OMS}{zplm}{m}{n}
\let\oldReturn\Return
\renewcommand{\Return}{\State\oldReturn}
\pgfplotsset{compat=1.5}
\newcommand{\A}{\pazocal{A}}
\newcommand{\B}{\pazocal{B}}
\newcommand{\C}{\pazocal{C}}
\newcommand{\K}{\pazocal{K}}
\newcommand{\W}{\pazocal{W}}
\newcommand{\Z}{\pazocal{Z}}
\newcommand{\D}{\pazocal{D}}
\newcommand{\E}{\pazocal{E}}
\newcommand{\U}{\pazocal{U}}
\newcommand{\J}{\pazocal{J}}
\newcommand{\I}{\pazocal{I}}
\renewcommand{\P}{\pazocal{P}}
\renewcommand{\S}{\pazocal{S}}
\newcommand{\x}{{X}}
\renewcommand{\u}{{U}}
\newcommand{\remove}[1]{}
\def \cN{{\mathcal N}^t}
\def \cNn{{\mathcal N}^b}
\def \*{\star}
\def \10n{\!\!\!\!\!\!\!\!\!\!}
\newcommand{\bK}{\bar{K}}
\newcommand{\R}{\mathbb{R}}
\newcommand{\bA}{\bar{A}}
\newcommand{\bB}{\bar{B}}
\newcommand{\bC}{\bar{C}}
\newtheorem{thm}{Theorem}
\newtheorem{lem}{Lemma}
\newtheorem{defn}{Definition}
\newtheorem{cor}{Corollary}
\newtheorem{prob}{Problem}
\newtheorem{prop}{Proposition}
\title{\LARGE \bf Approximating Constrained Minimum Cost Input-Output Selection for Generic Arbitrary Pole Placement in Structured Systems}
\author{Shana~Moothedath,
        Prasanna~Chaporkar
        and~Madhu~N.~Belur
\thanks{The authors are in the Department of Electrical Engineering, Indian Institute of Technology Bombay, India. Email: $\lbrace$shana, chaporkar, belur$\rbrace$@ee.iitb.ac.in. This work was supported in part by SERB (DST) and BRNS, India.}}
\begin{document}
\maketitle
\thispagestyle{empty}
\pagestyle{empty}
\begin{abstract}
This paper is about minimum cost constrained selection of inputs and outputs in structured systems for generic arbitrary pole placement. The input-output set is constrained in the sense that the set of states that each input can influence and the set of states that each output can sense is pre-specified. Our goal is to optimally select an input-output set that the system has no structurally fixed modes. Polynomial time algorithms do not exist for solving this problem unless P~=~NP. To this end, we propose an approximation algorithm by splitting the problem in to three sub-problems: a)~minimum cost accessibility problem, b)~minimum cost sensability problem and c)~minimum cost disjoint cycle problem. We prove that problems~a)~and~b) are equivalent to the weighted set cover problem. We also show that problem~c) can be solved using a minimum cost perfect matching algorithm. Using these, we give an approximation algorithm which solves the minimum cost generic arbitrary pole placement problem. The proposed algorithm incorporates an approximation algorithm to solve the weighted set cover problem to solve a)~and~b) and a minimum cost perfect matching algorithm to solve c). Further, we show that the algorithm has polynomial complexity  and gives an order optimal $O({\rm log}n)$ approximate solution to the minimum cost input-output selection for generic arbitrary pole placement problem, where $n$ denotes the number of states in the system.
\end{abstract}
\begin{IEEEkeywords}
Large scale control system design, Linear structured systems, Arbitrary pole placement, Input-output selection, Approximation algorithms..
\end{IEEEkeywords}
\section{Introduction}\label{sec:intro}
Consider structured matrices $\bA \in\{\*,0\}^{n \times n}$, $\bB \in\{\*,0\}^{n \times m}$ and $\bC \in\{\*,0\}^{p \times n}$ whose entries are either $\*$ or $0$. The matrices $\bA$, $\bB$ and $\bC$ {\it structurally represent} state, input and output matrices respectively of any control system $\dot{x} = Ax + Bu$, $y = Cx$ such that:
\begin{eqnarray}\label{eq:struc}
A_{ij} &=& 0 \mbox{~whenever~} \bA_{ij} = 0,\mbox{~and} \nonumber \\
B_{ij} &=& 0 \mbox{~whenever~} \bB_{ij} = 0,\mbox{~and} \nonumber \\
C_{ij} &=& 0 \mbox{~whenever~} \bC_{ij} = 0.
\end{eqnarray}
Any triple $(A,B,C)$ that satisfy \eqref{eq:struc} is said to be a {\it numerical realization} of the {\it structural system} $(\bA$, $\bB$, $\bC)$. Further, the matrix $\bK \in \{\*,0\}^{m \times p}$, where $\bK_{ij} = \*$ if the $j^{\rm th}$ output is available for static output feedback to the $i^{\rm th}$ input is referred as the {\it feedback matrix}. Let $[K]$ is the collection of all numerical realizations of $\bK$, i.e., $[K] := \{K:K_{ij} = 0 \mbox{~if~} \bK_{ij} = 0\}$. 
\begin{defn}\label{def:sfm}
The structural system $(\bA, \bB, \bC)$ is said not to have structurally fixed modes (SFMs) with respect to an information pattern $\bK$ if there exists one numerical realization $(A, B, C)$ of $(\bA, \bB, \bC)$ such that $\cap_{K \in [K]} \sigma(A + BKC) = \phi$, where the function $\sigma(T)$ denotes the set of eigenvalues of any square matrix $T$.
\end{defn}
 Let $p_u \in \R^m$, where every entry $p_u(i)$, $i=1,\ldots,m$, indicates the cost of using $i^{\rm th}$ input. Also, $p_y \in \R^p$, where every entry $p_y(j)$, $j=1,\ldots,p$, indicates the cost of using $j^{\rm th}$ output. For $\W \subseteq \{1,\ldots,m\}$, $\Z \subseteq \{1,\ldots,p\}$, let $\bB_\W$ be the restriction of $\bB$ to columns only in $\W$ and $\bC_\Z$ be the restriction of $\bC$ to rows only in $\Z$. Furthermore, let $\K = \{(\W, \Z) : (\bA,\bB_\W, \bC_\Z, \bK_{(\W \times \Z)}) \mbox{ has no structurally fixed modes}\}$. Our aim is to find $(\I, \J) \in \K$ such that the cost of inputs and outputs is minimized. Specifically, we wish to solve the following optimization: for any $(\I, \J)$, define $p(\I, \J) = \sum_{i\in\I}p_u(i) + \sum_{j\in\J}p_y(j)$. 
\begin{prob}\label{prob:one}
Given a structural system $(\bA, \bB, \bC)$, feedback matrix $\bK$ and cost vectors $p_u$, $p_y$, find
\[ (\I^\*, \J^\*) ~\in~ \arg\min_{\10n (\I, \J) \in \K} p(\I, \J). \] 
\end{prob}

We refer to Problem~\ref{prob:one} as {\it minimum cost constrained input-output selection for generic arbitrary pole placement}. Let $p^\* = p(\I^\*, \J^\*)$. Thus, $p^\*$ denotes the minimum cost for constrained input-output selection that ensures generic arbitrary pole placement. Without loss of generality, assume $(\bA, \bB, \bC, \bK)$ has no SFMs. Thus $\K$ is non-empty.

 In this paper we consider a special case in which $\bK$ is complete, i.e., $\bK_{ij} = \*$ for all $i,j$. Even with this restriction the problem is NP-hard. In our main contribution, we propose an approximation algorithm of computational complexity $O(n^3)$. In the worst case, the proposed algorithm achieves approximation ratio of $6~{\rm log\,}n$, and the ratio can be improved significantly in many practical systems. We also establish a negative result which states that no polynomial time algorithm can achieve approximation ratio of $\frac{1}{4}{\rm log\,}n$. Thus our algorithm is order optimal as it provides $O({\rm log\,}n)$ approximation. Formally, the main result of our paper is the following:
    
\begin{thm}\label{th:main}
Consider a structural system $(\bA, \bB, \bC)$, a complete feedback matrix $\bK$ and cost vectors $p_u, p_y$. Let $n$ be the number of states in the system and $(\I_a, \J_a)$ be an output of Algorithm~\ref{alg:threestage}. Then the following hold:
\begin{itemize}
\item[i)] $(\I_a, \J_a) \in \K$, i.e., $(\bA, \bB_{\I_a},\bC_{\J_a}, \bK_{(\I_a \times \J_a)})$ has no SFMs, and
\item[ii)] $p(\I_a, \J_a) \leqslant (2\,{\rm log\,}n) \,p^\*$,
\end{itemize}
Moreover, there does not exist any polynomial time algorithm to solve Problem~\ref{prob:one} that has approximation ratio $(1-o(1))\,{\rm log\,}n$. Thus the proposed algorithm is an order optimal approximation algorithm.
\end{thm}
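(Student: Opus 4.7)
The plan is to verify the three conclusions---feasibility, the $2\log n$ approximation factor, and the matching inapproximability lower bound---by exploiting a decomposition of the no-SFM condition into three independently certifiable sub-conditions.

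\textbf{Decomposition and feasibility.} I would appeal to the standard graph-theoretic characterization of absence of structurally fixed modes under a complete feedback pattern: $(\bA,\bB_{\I},\bC_{\J},\bK_{(\I\times\J)})$ has no SFMs if and only if (a)~every state is input-reachable from $\I$ (accessibility), (b)~every state is output-reachable into $\J$ (sensability), and (c)~the associated state-input/state-output bipartite graph admits a perfect matching (equivalently, a disjoint cycle cover in the closed-loop composite digraph). Algorithm~\ref{alg:threestage} assembles $(\I_a,\J_a)$ from solutions of three sub-problems that separately enforce (a), (b), and (c); since each property is monotone under taking supersets---adding inputs/outputs cannot destroy accessibility, sensability, or an existing matching---the union $(\I_a,\J_a)$ certifies all three, establishing part~(i).

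\textbf{Upper bound.} Let $p_a^\*, p_b^\*, p_c^\*$ denote the optima of the three sub-problems. Any $(\I^\*,\J^\*) \in \K$ is simultaneously feasible for each, so $p_c^\* \leqslant p^\*$, and since inputs of $\I^\*$ alone suffice for accessibility while outputs of $\J^\*$ alone suffice for sensability (the two budgets being disjoint in the definition of $p^\*$), one obtains $p_a^\* + p_b^\* \leqslant p^\*$. The accessibility and sensability sub-problems are cast as weighted set-cover instances over a universe of size at most $n$, so the greedy algorithm returns solutions within factor $H(n) \leqslant \log n$ of their respective optima, while minimum-cost perfect matching solves the cycle-cover sub-problem exactly. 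Summing,
\[
 p(\I_a,\J_a) \;\leqslant\; (\log n)(p_a^\* + p_b^\*) + p_c^\* \;\leqslant\; (\log n + 1)\, p^\* \;\leqslant\; (2\log n)\, p^\*
\]
for all $n \geqslant 2$, yielding part~(ii).

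\textbf{Lower bound and main obstacle.} For the inapproximability claim I would give an approximation-preserving reduction from weighted set cover to Problem~\ref{prob:one}. Given a universe of $n$ elements with weighted sets $S_1,\ldots,S_m$, construct a structural system whose $n$ states mirror the universe and whose $m$ input columns of $\bB$ encode the sets $S_i$ with identical weights, while rigging trivial zero-cost sensability and cycle-cover gadgets (e.g., a diagonal self-loop structure in $\bA$ and a cheap canonical output for every state) so those two conditions are automatically met independently of the input selection. Feasibility of $\I$ then coincides with $\{S_i : i \in \I\}$ being a set cover, so any $\alpha$-approximation to Problem~\ref{prob:one} yields an $\alpha$-approximation to weighted set cover; invoking Dinur--Steurer's theorem that set cover admits no $(1-o(1))\log n$ polynomial-time approximation unless P$=$NP forces $\alpha \geqslant (1-o(1))\log n$. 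The main obstacle I anticipate is the cost-accounting inequality $p_a^\* + p_b^\* \leqslant p^\*$, which requires arguing that the optimal joint witness cleanly splits between accessibility (paid by inputs) and sensability (paid by outputs) without any double-charging across the three sub-problems; the lower-bound reduction is conceptually clean but its gadgets must be designed carefully so that their cost is dominated by, and does not inflate, the encoded set-cover objective.
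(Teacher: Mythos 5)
Your proposal follows essentially the same route as the paper: decompose the no-SFM condition under complete $\bK$ into accessibility, sensability, and the disjoint-cycle/perfect-matching condition, solve the first two by greedy weighted set cover and the third exactly by minimum-cost matching, take the union (using monotonicity), and charge the cost against $p^\*$ via $p_a^\*+p_b^\*\leqslant p^\*$ and $p_c^\*\leqslant p^\*$, with the lower bound coming from set-cover inapproximability through a diagonal-$\bA$ reduction that makes sensability and the cycle cover free---exactly the paper's Algorithm~2. The one substantive elision is that the ``if'' direction of your opening characterization---that accessibility plus sensability under a complete $\bK$ forces every state into an SCC containing an $E_K$ edge, i.e.\ condition~a) of Proposition~1---is precisely what the paper spends Lemma~8 and the first half of its proof establishing (every SCC of $\D(\bA)$ lies on a DAG path from a covered non-top-linked SCC to a covered non-bottom-linked SCC, and the complete $\bK$ closes that path into a single SCC with a feedback edge), so it should be proved rather than cited as standard; your cost accounting is otherwise correct and in fact gives the slightly sharper bound $(\log n+1)\,p^\*$.
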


The organization of this paper is as follows: in Section~\ref{sec:graph} we discuss preliminaries, existing results and related work in this area. In Section~\ref{sec:minCC} we explain our approach to solve the minimum cost  input-output selection problem for generic arbitrary pole placement by splitting it in to three sub-problems: minimum cost accessibility, minimum cost sensability and minimum cost disjoint cycle problem. In Section~\ref{sec:main} we discuss an approximation algorithm for solving the problem and then prove the main results of the paper. In Section~\ref{sec:cases} we explain the approximation result in the context of few special cases. In Section~\ref{sec:conclu} we give the final concluding remarks.
\section{ Preliminaries, Existing Results and Related Work}\label{sec:graph}
In this section we first discuss few graph theoretic concepts used in the sequel and some existing results. Then we discuss related work in this area.

\subsection{Preliminaries and Existing Results} 
Arbitrary pole placement is said to be possible in a structural system if it has no structurally fixed modes (SFMs). Basically there are two types of fixed modes, \mbox{Type-1} and \mbox{Type-2} (see \cite{WanDav:73}, \cite{PicSezSil:84} for more details). To ensure non-existence of SFMs one has to ensure that both these types are absent in the system. Presence of Type-1 SFMs can be checked using the concept of strong connectedness of the system digraph which is constructed as follows: firstly, we construct the state digraph $\D(\bA) := \D(V_{\x}, E_{\x})$, where $V_{\x} = \{x_1, \ldots, x_n \}$ and $(x_j, x_i) \in E_{\x}$ if $\bA_{ij} \neq 0$. Thus a directed edge $(x_j, x_i)$ exists if state $x_j$ can influence state $x_i$. Now we construct the system digraph $\D(\bA, \bB, \bC, \bK) := \D(V_{\x}\cup V_{\u}\cup V_{Y}, E_{\x}\cup E_{\u}\cup E_{Y}\cup E_{K})$, where $V_{U} = \{u_1, \ldots, u_m \}$ and $V_{Y} = \{y_1, \ldots, y_p \}$. An edge $(u_j, x_i) \in E_{U}$ if $\bB_{ij} \neq 0$, $(x_j, y_i) \in E_{Y}$ if $\bC_{ij} \neq 0$ and $(y_j, u_i) \in E_K$ if $\bK_{ij} \neq 0$. Thus a directed edge $(u_j, x_i)$ exists if input $u_j$ can actuate state $x_i$ and a directed edge $(x_j, y_i)$ exists if output $y_i$ can sense state $x_j$. Construction of state digraph $\D(\bA)$ and system digraph $\D(\bA, \bB, \bC, \bK)$ is illustrated through an example in Figure \ref{fig:eg}. Next we define two concepts, namely accessibility and sensability, that we need for explaining our algorithm.
\begin{defn}\label{def:acc}
A state $x_i$ is said to be {\it accessible} if there exists a directed simple path from some input $u_j$ to $x_i$ in the digraph $\D(\bA, \bB, \bC, \bK)$. Also, a state $x_i$ is said to be {\it sensable} if there exists a directed simple path from $x_i$ to some output $y_j$ in the digraph $\D(\bA, \bB, \bC, \bK)$.
\end{defn}

A digraph is said to be strongly connected if for each ordered pair of vertices $(v_1,v_k)$
there exists an elementary path from $v_1$ to $v_k$. A strongly connected component (SCC) of a digraph is a maximal strongly connected subgraph of it. If $\D(\bA)$ is a single SCC, then the system is said to be {\it irreducible}.
\begin{figure}[t]
\begin{equation*} \label{eq:Amatrix}
\begin{array}{ll}
\bA = 
\begin{bmatrix}
\* & \* & 0 & 0\\
0 & \* & 0 & 0\\
\* & \* & 0 & \* \\
0 & 0 & 0 & \* \\[-4mm]
\hphantom{56} & \hphantom{56}& \hphantom{56}& \hphantom{56}
\end{bmatrix}, & 
\bB = 
\begin{bmatrix}
\* & 0 & \* \\
0 & \* & \* \\
\* & \* & 0  \\
0 & 0 & \* \\[-4mm]
\hphantom{56} & \hphantom{56}& \hphantom{56}
\end{bmatrix}, \\
\bC = 
\begin{bmatrix}
0 & 0 & \* & 0 \\
\* & 0 & 0 & 0 \\[-4mm]
\hphantom{56} & \hphantom{56}& \hphantom{56}& \hphantom{56}
\end{bmatrix}, & 
\bK = 
\begin{bmatrix}
\* & \* \\
\* &\* \\
\* & \* \\[-4mm]
\hphantom{56} & \hphantom{56}
\end{bmatrix}.
\end{array}
\end{equation*}
\begin{subfigure}[b]{0.2\textwidth}
\begin{tikzpicture}[->,>=stealth',shorten >=0.3pt,auto,node distance=1.5cm,
                thick,main node/.style={circle,draw,font=\Large\bfseries}]

\tikzset{every loop/.style={min distance=10mm,looseness=10}}
    
\path[->] (-0.1,0.15) edge [in=60,out=100,loop] node[auto] {} ();
\path[->] (1.9,0.15) edge [in=60,out=100,loop] node[auto] {} ();
\path[->] (-1.0,-1.65) edge [in=-60,out=-100,loop] node[auto] {} ();
                
\definecolor{myblue}{RGB}{80,80,160}
\definecolor{mygreen}{RGB}{80,160,80}
\definecolor{myred}{RGB}{144, 12, 63}
\definecolor{myyellow}{RGB}{214, 137, 16}

  \node at (0.4,0.2) {\small $x_1$};
  \node at (-1,-1.1) {\small $x_2$};
  \node at (1,-1.1) {\small $x_3$};
  \node at (2.4,0.2) {\small $x_4$};
    
  \fill[myblue] (0,0) circle (5.0 pt);
  \fill[myblue] (2.0,0) circle (5.0 pt);
  \fill[myblue] (1.0,-1.5) circle (5.0 pt);
  \fill[myblue] (-1.0,-1.5) circle (5.0 pt);
  
  \draw (0,-0.15)  ->  (0.85,-1.5);
  \draw (-0.85,-1.5)  ->  (0.85,-1.5);
  \draw (-0.85,-1.5)  ->  (0,-0.15);
  \draw (2,-0.15)  ->  (1.15,-1.5);
     \end{tikzpicture}
\caption{$\D(\bA)$}
\label{fig:digraph}
\end{subfigure}~\hspace{0.5 mm}
\begin{subfigure}[b]{0.2\textwidth}
\begin{tikzpicture}[->,>=stealth',shorten >=0.5pt,auto,node distance=2cm,
                thick,main node/.style={circle,draw,font=\Large\bfseries}]

\tikzset{every loop/.style={min distance=10mm,looseness=10}}
    
\path[->] (-0.01,0.15) edge [in=60,out=100,loop] node[auto] {} ();
\path[->] (2,-0.15) edge [in=-60,out=-100,loop] node[auto] {} ();
\path[->] (-1.0,-1.65) edge [in=-60,out=-100,loop] node[auto] {} ();
                
\definecolor{myblue}{RGB}{80,80,160}
\definecolor{mygreen}{RGB}{80,160,80}
\definecolor{myred}{RGB}{144, 12, 63}
\definecolor{myyellow}{RGB}{214, 137, 16}

  \node at (-0.3,0.1) {\small $x_1$};
  \node at (-1,-1.1) {\small $x_2$};
  \node at (1,-1.8) {\small $x_3$};
  \node at (2.4,0.2) {\small $x_4$};
    
  \fill[myblue] (0,0) circle (5.0 pt);
  \fill[myblue] (2.0,0) circle (5.0 pt);
  \fill[myblue] (1.0,-1.5) circle (5.0 pt);
  \fill[myblue] (-1.0,-1.5) circle (5.0 pt);
  
  \draw (0,-0.15)  ->  (0.85,-1.5);
  \draw (-0.85,-1.5)  ->  (0.85,-1.5);
  \draw (-0.85,-1.5)  ->  (0,-0.15);
  \draw (2,-0.15)  ->  (1.15,-1.5);

  \node at (1.4,1) {\small $u_1$};
  \node at (0,-3.5) {\small $u_2$};
  \node at (2.4,1) {\small $u_3$};

  \fill[myred] (1,1) circle (5.0 pt);
  \draw [myred] (1,1)  ->   (0.15,0);
  \draw [myred] (1,1)  ->   (1,-1.35);
  \fill[myred] (0,-3) circle (5.0 pt);
  \draw [myred] (0,-3)  ->   (0.85,-1.5);
  \draw [myred] (0,-3)  ->   (-0.85,-1.5);  
  \fill[myred] (2,1) circle (5.0 pt);
  \draw [myred] (2,1)  ->   (2,0.15);
  \draw [myred] (2,1)  ->   (0.15,0);
  \draw [myred] (2,1)  ->   (-0.85,-1.5); 
  
  \node at (2.4,-3) {\small $y_1$};
  \node at (-0.3,1.5) {\small $y_2$};
    
  \fill[mygreen] (2,-3) circle (5.0 pt); 
  \fill[mygreen] (0,1.5) circle (5.0 pt);
  
  \draw [mygreen] (1.1,-1.6)  ->   (1.9,-2.9);
  \draw [mygreen] (1.9,-3)  ->   (0.15,-3);
  \draw [mygreen] (0,0.14)  ->   (0,1.4);
  \draw [mygreen] (0,1.6)  ->   (0.9,1.1); 
  \draw [mygreen] (0,1.6)  ->   (1.9,1.1);
   
\draw [mygreen] (2,-2.9)  ->   (1.0,0.9);
\draw [mygreen] plot [smooth,->] coordinates {(2.0,-2.9) (2.7,-1) (2.7,0) (2.2,0.90) };
\draw [mygreen] plot [smooth] coordinates {(0.0,1.5) (-1.4,-1) (-1.4,-2) (-0.2,-2.9)  };
\end{tikzpicture}
\caption{$\D(\bA, \bB, \bC, \bK)$}
\label{fig:systemdigraph}
\end{subfigure}
\caption{The state digraph and system digraph representation of the structured system $(\bA, \bB, \bC, \bK)$ is shown in Figure~\ref{fig:digraph} and Figure~\ref{fig:systemdigraph} respectively.}
\label{fig:eg}
\end{figure}  
Using the digraph $\D(\bA, \bB, \bC, \bK)$ a necessary and sufficient graph theoretic condition for absence of SFMs is given in the following result.

\begin{prop} [\cite{PicSezSil:84}, Theorem 4]\label{prop:SFM1} 
A structural system $(\bA, \bB, \bC)$ has no structurally fixed modes with respect to a feedback matrix $\bK$ if and only if the following conditions hold:\\
\noindent a)~in the digraph $\D(\bA, \bB, \bC, \bK)$, each state node $x_i$ is contained in an SCC which includes an edge in $E_K$, and \\
\noindent b)~there exists a finite disjoint union of cycles $\C_g = (V_g, E_g)$ in $\D(\bA, \bB, \bC, \bK)$ where $g$ 
is a positive integer such that $V_X \subset \cup_{g}V_g$.
\end{prop}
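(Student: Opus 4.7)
The plan is to establish the biconditional by treating the two graph-theoretic conditions as certifying the absence of the two algebraically distinct causes of structurally fixed modes (SFMs): Type-1 SFMs, arising when part of the state dynamics is decoupled from every feedback loop, and Type-2 SFMs, arising when the closed-loop matrix is generically singular at some fixed value. Throughout I would regard $\det(sI - A - BKC)$ as a polynomial in $s$ whose coefficients are polynomials in the free entries of $(A, B, C, K)$, and exploit the Leibniz / K\"onig combinatorial expansion of the determinant in terms of disjoint cycle covers of the combined digraph $\D(\bA, \bB, \bC, \bK)$.

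For the necessity direction ($\Rightarrow$) I would argue by contrapositive in two cases. If (a) fails, pick a state $x_i$ lying in no SCC that contains a feedback edge. Taking the union of $x_i$'s SCC together with every vertex that cannot reach a feedback edge, and then applying a simultaneous permutation of the state indices, one writes $A + BKC$ in block upper-triangular form in which the diagonal block corresponding to this subset equals the corresponding principal submatrix of $A$ for every $K \in [K]$. Its spectrum is thus independent of $K$, producing an SFM. If (b) fails, then every term in the Leibniz expansion of $\det(A + BKC)$ is forced to contain a structurally zero factor, because each nonzero monomial corresponds to a disjoint cycle cover of $V_X$ in $\D(\bA, \bB, \bC, \bK)$. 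Therefore $\det(A + BKC)$ vanishes identically, so $s = 0$ is an eigenvalue of $A + BKC$ for every $K$ and every realization.

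For the sufficiency direction ($\Leftarrow$) I would show that when both (a) and (b) hold one can select a single realization $(A, B, C)$ so that $\cap_{K \in [K]} \sigma(A + BKC) = \emptyset$. Condition (b) guarantees that $\det(A + BKC)$ is a nonzero polynomial in the free entries, since at least one spanning cycle cover yields a monomial whose support is not cancelled by any other cover; thus $0$ is not a fixed mode for generic $(A, B, C)$. For any candidate fixed $\lambda \neq 0$, I would use (a) to show that at least one coefficient of $\det(sI - A - BKC)$ depends nontrivially on the entries of $K$: because each state lies in an SCC containing a feedback edge, perturbing that edge's weight in $K$ propagates around a cycle through the state and shifts a coefficient of the characteristic polynomial. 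Hence $\{K \in [K] : \lambda \in \sigma(A + BKC)\}$ is a proper algebraic subset of $[K]$. Since a spectrum is a finite set and the intersection over all admissible $K$ must then lie in the common zero set of these nontrivial polynomial conditions, the intersection is empty.

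The main obstacle lies in the sufficiency direction, specifically in converting the SCC-with-feedback condition (a) into the statement that some coefficient of the characteristic polynomial varies nontrivially with each $K$ entry needed to move a given $\lambda$. A clean way to handle this is to decompose $\D(\bA, \bB, \bC, \bK)$ into its SCCs and, within each SCC containing a feedback edge, invoke a classical generic-assignability result of Wang--Davison / Corfmat--Morse type, which asserts that strong connectivity together with at least one feedback loop suffices for generic closed-loop pole assignment of the corresponding subsystem; the full claim then follows by assembling these SCC-local assignments along a topological ordering of the SCC DAG, using condition (b) as the global genericity certificate that no cancellations occur across blocks.
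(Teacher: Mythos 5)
First, a point of reference: the paper does not prove this proposition at all --- it is imported verbatim as Theorem~4 of \cite{PicSezSil:84} and used as a black box, so there is no in-paper argument to compare yours against; your sketch has to stand on its own. Measured that way, your necessity direction is essentially sound. The block-triangularization argument for a failure of condition~a) is the classical Type-1 argument (though your invariant set should be defined via the condensation of the closed-loop state-influence digraph, in which $x_j$ influences $x_i$ iff $\bA_{ij}\neq 0$ or there is a path $x_j \to y_l \to u_k \to x_i$ through an $E_K$ edge; ``every vertex that cannot reach a feedback edge'' is not quite the right closure, but the repair is routine). The Leibniz/cycle-cover argument for a failure of condition~b) is correct, since in the structured setting distinct spanning cycle families contribute distinct monomials in the free parameters and hence cannot cancel, so $\det(A+BKC)\equiv 0$ and $s=0$ is fixed.

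The genuine gap is in your sufficiency direction. Two problems. First, the quantifier entanglement: you must exhibit \emph{one} realization $(A,B,C)$ that works simultaneously for \emph{all} candidate fixed $\lambda$, but your candidate set $\sigma(A)$ (note $0\in[K]$, so $\cap_{K\in[K]}\sigma(A+BKC)\subseteq\sigma(A)$, which is the finiteness reduction you only gesture at) itself depends on the realization you are choosing; the claim ``perturbing that edge's weight propagates around a cycle and shifts a coefficient of the characteristic polynomial'' does not establish that $\det(\lambda I - A - BKC)$ is a nonconstant polynomial in $K$ \emph{at the specific roots $\lambda$ of the specific realization} --- shifting a coefficient can leave a particular root unmoved, and ruling this out requires exactly the algebraic-independence/genericity bookkeeping that constitutes the real proof in \cite{PicSezSil:84}. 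Second, your fallback --- invoking Wang--Davison/Corfmat--Morse generic assignability inside each SCC containing a feedback edge --- is close to circular: those results are themselves theorems about (absence of) fixed modes and pole assignability under connectivity hypotheses, and verifying their hypotheses here is essentially the statement being proved. Moreover, your use of condition~b) as a ``global genericity certificate that no cancellations occur across blocks'' is unfounded: condition~b) governs only the mode at the origin, and in any block-triangular assembly along the SCC DAG the closed-loop spectrum is simply the union of the diagonal-block spectra, so there is no cross-block cancellation question for it to certify. As written, the sufficiency half is a plan whose crux step is asserted rather than proved.
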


In Proposition~\ref{prop:SFM1}, condition~a) corresponds to SFMs of Type~1 and condition~b) corresponds to SFMs of Type~2. In order to characterize condition~a) we first generate a {\it directed acyclic graph} (DAG) associated with $\D(\bA)$ by condensing each SCC to a supernode. In this DAG, vertex set comprises of all SCCs in $\D(\bA)$. A directed edge exists between two nodes of the DAG {\it if and only if} there exists a directed edge connecting two states in the respective SCCs in $\D(\bA)$. Using this DAG we have the following definition that characterizes SCCs in $\D(\bA)$.

\begin{defn}\label{def:scc}
An SCC is said to be \underline{linked} if it has atleast one incoming or outgoing edge from another SCC. Further, an SCC is said to be \underline{non-top} \underline{linked} (\underline{non-bottom} \underline{linked}, resp.) if it has no incoming (outgoing, resp.) edges to (from, resp.) its vertices from (to, resp.) the vertices of another SCC.
\end{defn}

Without loss of generality we will assume that $\D(\bA)$ has $q$ non-top linked SCCs, $\cN_1, \ldots,\cN_q$ and $k$ non-bottom linked SCCs, $\cNn_1, \ldots, \cNn_k$. We have the following definition.

\begin{defn}\label{def:cover}
An SCC is said to be covered by input $u_j$ if there exists a state $x_i$ in the SCC such that $\bB_{ij} = \*$. Similarly, an SCC is said to be covered by output $y_j$ if there exists a state $x_i$ in the SCC such that $\bC_{ji} = \*$.
 \end{defn}
 We define $\mu_i := \{j:\cN_j \mbox{~is covered by~} u_i\}$ and $\eta_i := \{j:\cNn_j \mbox{~is covered by~} y_i\}$. Let $\mu_{\rm max} := {\rm max}_{ i} \mu_i$ and $\eta_{\rm max} := {\rm max}_i \eta_i$. In the example given in Figure~\ref{fig:digraph} each state is individually an SCC. Moreover, there are two non-top linked SCCs, $\cN_1 = x_2$ and $\cN_2 = x_4$ and one non-bottom linked SCC, $\cNn_1 = x_3$. Note that $x_1$ is neither non-top linked nor non-bottom linked SCC. Also, $\mu_1 = 0$, $\mu_2 = 1$, $\mu_3 = 2$, $\eta_1 = 1$ and $\eta_2 =0$. Thus $\mu_{\rm max} = 2$ and $\eta_{\rm max}=1$.
Following is an important observation.
\begin{cor}\label{cor:acc}
All states are accessible (sensable, resp.) if all non-top (non-bottom, resp.) linked SCCs are covered by input (output, resp.).
\end{cor}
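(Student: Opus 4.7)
The plan is to prove the accessibility part directly by tracing a path backward through the DAG of SCCs, and then observe that the sensability part follows by the dual argument (interchanging roles of edges, inputs versus outputs, and non-top versus non-bottom linked SCCs).

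First I would fix an arbitrary state $x_i$ and consider the SCC $\S_i$ in $\D(\bA)$ that contains $x_i$. In the DAG obtained by condensing the SCCs of $\D(\bA)$, I would walk backward from $\S_i$ along incoming edges: as long as the current SCC has at least one incoming edge from another SCC, move to such a predecessor. Because the DAG is finite and acyclic, this backward walk cannot revisit a node and must terminate, necessarily at a node with no incoming edges, i.e., at a non-top linked SCC $\cN_j$. This yields a sequence of SCCs $\cN_j = \S_{i_0}, \S_{i_1}, \ldots, \S_{i_r} = \S_i$ such that for each consecutive pair $(\S_{i_{t-1}}, \S_{i_t})$ there exists a directed edge in $\D(\bA)$ from some state in $\S_{i_{t-1}}$ to some state in $\S_{i_t}$.

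Next I would use the hypothesis: since $\cN_j$ is covered by input, there exists $x_\ell \in \cN_j$ and an input $u_k$ with $\bB_{\ell k} = \*$, giving an edge $(u_k, x_\ell) \in E_U$ in $\D(\bA, \bB, \bC, \bK)$. I then construct a simple directed path from $u_k$ to $x_i$ by concatenating: (i) the edge $(u_k, x_\ell)$; (ii) within each SCC $\S_{i_t}$ of the sequence, the path from the ``entry'' state to the ``exit'' state (which exists by strong connectedness); and (iii) the inter-SCC edges guaranteed by the DAG traversal. Simplicity of the final path follows from the fact that distinct SCCs share no vertices and within each SCC one can always extract a simple path. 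Hence $x_i$ is accessible.

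Finally, for the sensability claim, I would dualize by looking at the transpose digraph, which swaps non-top linked and non-bottom linked SCCs and swaps the roles of input edges $E_U$ with output edges $E_Y$; the same argument then yields a simple path from $x_i$ to some output $y_k$. I do not anticipate a real obstacle here; the only mildly delicate point is verifying that the constructed walk can be taken simple, which is immediate because distinct SCCs of $\D(\bA)$ are vertex-disjoint.
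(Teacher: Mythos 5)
Your proof is correct and is exactly the argument the paper has in mind: the paper simply asserts the corollary is ``an immediate consequence of Definitions~\ref{def:acc}~and~\ref{def:scc},'' and your backward walk in the condensation DAG to a non-top linked SCC, followed by concatenating the covering input edge with simple intra-SCC paths and inter-SCC edges (plus the dual argument for sensability), is the standard way to fill in those details. No gaps.
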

Corollary~\ref{cor:acc} is an immediate consequence of Definitions~\ref{def:acc}~and~\ref{def:scc}. For a generic system $(\bA, \bB, \bC)$ with feedback matrix $\bK$, verifying absence of SFMs has polynomial complexity. Specifically, condition~a) can be verified in $O(n^2)$ computations using the concept of SCCs in $\D(\bA, \bB, \bC, \bK)$ \cite{AhoHop:74}. Condition~b) can be verified in $O(n^{2.5})$ computations using concepts of information paths given in \cite{PapTsi:84} or using bipartite matching as proposed in \cite{PeqKarPap:15}. In our work, we use bipartite matching condition and so we explain this in detail.

 Given an undirected bipartite graph $G(V, \widetilde{V}, \E)$, where $V \cup \widetilde{V}$ denotes the set of vertices and $\E \subseteq V \times \widetilde{V}$ denotes the set of edges, a matching $M$ is a collection of edges $M \subseteq \E$ such that for any two edges $(i,j), (u,v) \in M$, $i \neq u$ and $j \neq v$. A perfect matching is a matching $M$ such that $|M| = {\rm min}(|V|, |\widetilde{V}|)$. Now for checking condition~b) in a structural system, we use the bipartite graph $\B(\bA, \bB, \bC, \bK)$ constructed in \cite{PeqKarPap:15}. Let $\B(\bA, \bB, \bC, \bK) := \B(V_{X'} \cup V_{U'} \cup V_{Y'}, V_{X}\cup V_{U} \cup V_{Y}, \E_{\x} 
 \cup \E_{\u} \cup \E_Y \cup \E_K \cup \E_\mathbb{U} \cup \E_\mathbb{Y})$, where $V_{X'}=\{x'_1, \ldots, x'_n \}$, $V_{U'}=\{u'_1, \ldots, u'_m \}$, $V_{Y'} = \{y'_1, \ldots, y'_p \}$ and $V_{X}=\{x_1, \ldots, x_n \}$, $V_{U}=\{u_1, \ldots, u_m \}$ and $V_{Y} = \{y_1, \ldots, y_p \}$. Also, $(x'_i, x_{j}) \in \E_{\x} \Leftrightarrow (x_j, x_i) \in E_{\x}$, $(x'_i, u_{j}) \in \E_{\u} \Leftrightarrow (u_j, x_i) \in E_{\u}$, $(y'_{j}, x_i) \in \E_{Y} \Leftrightarrow (x_i, y_j) \in E_{Y}$ and $(u'_i, y_{j}) \in \E_{K} \Leftrightarrow (y_j, u_i) \in E_{K}$. Moreover, $\E_\mathbb{U}$ include edges $(u'_i, u_i)$ for $i =1,\ldots,m$ and  $\E_\mathbb{Y}$ include edges $(y'_j, y_j)$ for $j =1,\ldots,p$. We show that there exists a perfect matching in $\B(\bA, \bB, \bC, \bK)$ if and only if the system $(\bA, \bB, \bC)$ along with feedback matrix $\bK$ satisfies condition~b) (see Section~\ref{sec:main}).

\begin{figure}[t]
\begin{subfigure}[b]{0.4\textwidth}
\centering
\definecolor{myblue}{RGB}{80,80,160}
\definecolor{mygreen}{RGB}{80,160,80}
\definecolor{myred}{RGB}{144, 12, 63}
\definecolor{myyellow}{RGB}{214, 137, 16}
\definecolor{aquamarine}{rgb}{0.5, 1.0, 0.83}
\begin{tikzpicture} [scale = 0.2]               
          \node at (10,-4.5) {\small $x'_1$};
          \node at (10,-7.0) {\small $x'_2$};
          \node at (10,-9.5) {\small $x'_3$};
          \node at (10,-12.0) {\small $x'_4$};
          \node at (10,-15) {\small $u'_1$};
          \node at (10,-17.5) {\small $u'_2$};
          \node at (10,-20) {\small $u'_3$};
          \node at (10,-22.5) {\small $y'_1$};
          \node at (10,-25) {\small $y'_2$};           
         
          \fill[myblue] (12,-5) circle (30.0 pt); 
          \fill[myblue] (12,-7.5) circle (30.0 pt);
          \fill[myblue] (12,-10) circle (30.0 pt);
          \fill[myblue] (12,-12.5) circle (30.0 pt);
          \fill[myred] (12,-15) circle (30.0 pt);
          \fill[myred] (12,-17.5) circle (30.0 pt);
          \fill[myred] (12,-20) circle (30.0 pt); 
          \fill[mygreen] (12,-22.5) circle (30.0 pt);
          \fill[mygreen] (12,-25) circle (30.0 pt); 
                    
\draw (13,-5)  --   (31,-5);
\draw (13,-5)  --   (31,-7.5);
\draw (13,-7.5)  --   (31,-7.5);
\draw (13,-10)  --   (31,-5);
\draw (13,-10)  --   (31,-7.5);
\draw (13,-10)  --   (31,-12.5);
\draw (13,-12.5)  --   (31,-12.5);

\draw (13,-5)  --   (31,-15);
\draw (13,-10)  --   (31,-15);
\draw (13,-7.5)  --   (31,-17.5);
\draw (13,-10)  --   (31,-17.5);
\draw (13,-5)  --   (31,-20);
\draw (13,-7.5)  --   (31,-20);
\draw (13,-12.5)  --   (31,-20);

\draw[myred] (13,-15)  --   (31,-15);
\draw[myred] (13,-17.5)  --   (31,-17.5);
\draw[myred] (13,-20)  --   (31,-20);

\draw[mygreen] (13,-22.5)  --   (31,-22.5);
\draw[mygreen] (13,-25)  --   (31,-25);

\draw[myred] (13,-15)  --   (31,-22.5);
\draw[myred] (13,-15)  --   (31,-25);
\draw[myred] (13,-17.5)  --   (31,-22.5);
\draw[myred] (13,-17.5)  --   (31,-25);
\draw[myred] (13,-20)  --   (31,-22.5);
\draw[myred] (13,-20)  --   (31,-25);

\draw[mygreen] (13,-22.5)  --   (31,-10);
\draw[mygreen] (13,-25)  --   (31,-5);

          \node at (34,-5) {\small $x_1$};
          \node at (34,-7.5) {\small $x_2$};
          \node at (34,-10) {\small $x_3$};
          \node at (34,-12.5) {\small $x_4$};
          \node at (34,-15) {\small $u_1$};
          \node at (34,-17.5) {\small $u_2$};
          \node at (34,-20) {\small $u_3$};
          \node at (34,-22.5) {\small $y_1$};
          \node at (34,-25) {\small $y_2$};
          
          \fill[myblue] (32,-5) circle (30.0 pt); 
          \fill[myblue] (32,-7.5) circle (30.0 pt);
          \fill[myblue] (32,-10) circle (30.0 pt);
          \fill[myblue] (32,-12.5) circle (30.0 pt);
          \fill[myred] (32,-15) circle (30.0 pt);
          \fill[myred] (32,-17.5) circle (30.0 pt);                                
          \fill[myred] (32,-20) circle (30.0 pt);
          \fill[mygreen] (32,-22.5) circle (30.0 pt);
          \fill[mygreen] (32,-25) circle (30.0 pt);                                

        \node at (12,-3) {$V_{X'} \cup V_{U'} \cup V_{Y'}$};
        \node at (32,-3) {$V_{X} \cup V_{U} \cup V_{Y}$};
\end{tikzpicture}
\end{subfigure}
\caption{The bipartite digraph representation $\B(\bA, \bB, \bC, \bK)$ of the structured system $(\bA, \bB, \bC, \bK)$ shown in Figure \ref{fig:eg}.}
\label{fig:system_bigraph}
\end{figure}
 Note that $\D(\bA)$, $\D(\bA,\bB, \bC, \bK)$ are digraphs, but $\B(\bA, \bB, \bC, \bK)$ is an undirected graph. Also, $E$ denotes directed edges and $\E$ denotes undirected edges. The system bipartite graph $\B(\bA, \bB, \bC, \bK)$ for the structural system given in Figure~\ref{fig:eg} is shown in Figure~\ref{fig:system_bigraph}. Summarizing, a structural closed-loop system is said not to have SFMs if and only if all state vertices lie in some SCC of $\D(\bA, \bB, \bC, \bK)$ with an edge in $E_K$ and the system bipartite graph $\B(\bA, \bB, \bC, \bK)$ has a perfect matching. Thus, using the two graph theoretic conditions explained in this section, we conclude that presence of SFMs in a structural closed-loop system can be checked in $O(n^{2.5})$ computations. Hence one can conclude if generic arbitrary pole placement is possible in a structural system in polynomial time. However, optimal selection of input-output set that guarantee arbitrary pole placement cannot be solved in polynomial time unless P~=~NP \cite{PeqKarPap:15}.
 
 \subsection{Related Work}
 In large scale systems, including biological systems, the web, power grids and social network to name a few, more often only the connections in the graph are known. The exact parameters are unavailable. In this context, structural analysis of the system is performed to study the various system properties generically (see \cite{Lin:74}, \cite{Mur:87}, \cite{PapTsi:84}, \cite{PicSezSil:84} and references therein). Study of controllability and observability of the system generically using the structure of the system is referred to as {\it structural controllability} and {\it structural observability}. Structural controllability was introduced by Lin in \cite{Lin:74}. Since then various associated problems including minimum input selection \cite{LiuBar:16}, \cite{ComDio:15}, \cite{Ols:15} and \cite{PeqKarAgu_2:16}, input addition for structural controllability \cite{ComDio:13}, strong structural controllability \cite{ChaMes:13}, minimum cost control selection and control configuration selection \cite{PeqKarPap:15} are addressed in the literature. In most of these papers the structure of the input (output, resp.) matrix is not constrained. For example \cite{PeqKarAgu_2:16} discusses the problem of finding sparsest set $(\bB,\bC,\bK)$ for a given $\bA$ such that arbitrary pole placement is possible. This problem can be solved in polynomial complexity. However, constrained input (output, resp.) selection for structural controllability (observability, resp.) is NP-hard \cite{PeqSouPed:15}. A special class of systems where the state bipartite graph $\B(\bA)$ has a perfect matching and every input can influence a single state (dedicated input) is discussed in \cite{PeqSouPed:15}. Note that under these assumptions the problem is not NP-hard. However, for the general case there are no known approximation results. Given $(\bA, \bB, \bC)$ finding the sparsest $\bK$ such that the closed-loop system has no SFMs is proved to be NP-hard in \cite{CarPeqAguKarPap:15}.

This paper focusses on minimum cost constrained input-output selection for generic arbitrary pole placement of structural systems. It is shown to be NP-hard in \cite{PeqKarPap:15}. This paper is motivated by \cite{PeqKarPap:15} where Pequito et.al investigated Problem~\ref{prob:one}  along with costs for $\bK$ on a class of systems whose graph is irreducible. For this class of systems Problem~\ref{prob:one} is not NP-hard. However, for general systems there are no known results. We address Problem~\ref{prob:one} in its full generality. Note that we do not assume cost on $\bK$. Unfortunately there do not exist polynomial algorithms for solving this unless P~=~NP. To this end, we propose an approximation algorithm for solving Problem~\ref{prob:one}. 

Our key contributions in this paper are threefold:\\
\noindent $\bullet$ We provide a polynomial time approximation algorithm that gives approximation ratio $6\,{\rm log\,}n$ for solving Problem~\ref{prob:one}.\\
\noindent $\bullet$ We prove that no polynomial time algorithm can achieve approximation
ratio $\frac{1}{4}{\rm log\,}n$. Thus the proposed algorithm is order optimal\\
\noindent $\bullet$ We show that the approximation can be much tighter in practical systems.\\
In the next section we detail our approach.
\section{Approximating Minimum Cost Constrained Input-Output Selection Problem for Generic Arbitrary Pole Placement} \label{sec:minCC}
Our approach for solving Problem~\ref{prob:one} is to split the problem in to three sub-problems listed below: \\
\noindent$\bullet$ Minimum cost accessibility problem\\
\noindent$\bullet$ Minimum cost sensability problem\\
\noindent$\bullet$ Minimum cost disjoint cycle problem

Broadly, minimum cost accessibility (sensability, resp.) problem aims at finding minimum cost sub-collection of inputs (outputs, resp.) that cover all states. In minimum cost disjoint cycle problem, our aim is to find minimum cost sub-collection of inputs and outputs such that condition~b) is satisfied given that all chosen outputs connect to all chosen inputs (recall that $\bK_{ij} = \*$ for all $i,j$). For better readability and notational brevity we denote the structural system $(\bA, \bB, \bC)$ with feedback matrix $\bK$ and cost vectors $p_u, p_y$ as $(\bA, \bB, p_u)$ (without output) while discussing the accessibility problem  and as $(\bA, \bC, p_y)$ (without input) while discussing the sensability problem.

 Firstly we show that the minimum cost accessibility (sensability, resp.) problem is ``equivalent to" the weighted set cover problem. On account of the equivalence any algorithm for weighted set cover can be used for solving the minimum cost accessibility and sensability problems with the same performance guarantees and vice-versa. Weighted set cover problem is a well studied NP-hard problem \cite{CorLeiRivSte:01}. There exist approximation algorithms that give solution to the weighted set cover problem up to log factor in problem size \cite{Chv:79}. However, there also exist inapproximability result showing that it cannot be approximated up to a constant factor \cite{LunYan:94}. Thus, using the equivalence of the problems we provide an order optimal approximation algorithm to solve the minimum cost accessibility and sensability problems.

Then we show that the minimum cost disjoint cycle problem can be solved using a minimum cost perfect matching problem defined on $\B(\bA, \bB, \bC, \bK)$. Bipartite matching is also a well studied area and there exist polynomial time algorithm of complexity $O(\ell^3)$ that find minimum cost perfect matching in a bipartite graph with $\ell$ nodes on one side \cite{CorLeiRivSte:01}. Using the minimum cost perfect matching algorithm we provide a polynomial time algorithm to solve the minimum cost disjoint cycle problem optimally. Then we prove that combining the solutions to these sub-problems we can obtain an approximate solution to Problem~\ref{prob:one}.  Now we formally define and tackle each of these sub-problems separately in the following subsections.

\subsection{Solving Minimum Cost Accessibility Problem}\label{sec:access}
In this subsection, we establish a relation between the accessibility condition for structural controllability and the weighted set cover problem. Specifically, we show that when the inputs are constrained and each input is associated with a cost, then satisfying minimum cost accessibility condition is equivalent to solving a weighted set cover problem defined on the structural system.

Consider a structural system ($\bA, \bB$) and a cost vector $p_u$ denoted as ($\bA, \bB, p_u$). This system is said to satisfy the minimum cost accessibility condition if all the non-top linked SCCs in $\D(\bA)$ are covered using the least cost input set possible. That is, we need to find a set of inputs ${\I^\*_\A} \subseteq \{1,\ldots,m\}$ such that all state nodes are accessible in $\D(\bA, \bB_{\I^\*_\A}, \bC, \bK)$ and $p(\I^\*_\A) \leqslant p(\I_\A)$ for any $\I_\A \subseteq \{1,\ldots,m\}$ that satisfy accessibility of all state nodes in $\D(\bA, \bB_{\I_\A}, \bC, \bK)$. Specifically, we need to solve the following optimization: for any $\I \subseteq \{1,\ldots,m\}$, define $p(\I) = \sum_{i\in\I}p_u(i)$.
\begin{prob}\label{prob:access}
Given $(\bA, \bB, p_u)$, find $\I^\*_\A$

\[ \I^\*_\A ~\in~ \arg\min_{\10n \I_\A \subseteq \{1,\ldots,m\}} p(\I_\A), \] 
such that all state nodes are accessible in $\D(\bA, \bB_{\I_\A}, \bC, \bK)$.
\end{prob}

 We refer to Problem~\ref{prob:access} as {\it the minimum cost accessibility problem}. Before showing the equivalence between Problem~\ref{prob:access} and the weighted set cover problem, we first describe the weighted set cover problem for the sake of completeness. Weighted set cover problem is a well studied NP-hard problem  \cite{Chv:79}. Given a universe of $N$ elements $\U = \{ 1,2, \cdots, N\}$, a set of $r$ sets $\P = \{\S_1, \S_2, \cdots, \S_r \}$ with $\S_i \subset \U$ and $\bigcup_{i = 1}^r \S_i = \U$ and a weight function $w$ from $\P$ to the set of non-negative real numbers, weighted set cover problem consists of finding a set $\S^\* \subseteq \P$ such that $\cup_{\S_i \in \S^\*} \S_i = \U$ and $\sum_{\S_i \in \S^\*}w(i) \leqslant \sum_{\S_i \in \widetilde{\S}}w(i)$ for any $\widetilde{\S}$ that satisfies $\cup_{\S_i \in \widetilde{\S}} = \U$. Now we reduce Problem~\ref{prob:access} to an instance of the weighted set cover problem in polynomial time. 

\begin{algorithm}[t]
  \caption{Pseudo-code for reducing minimum cost accessibility problem to a weighted set cover problem
  \label{alg:access1}}
  \begin{algorithmic}
\State \textit {\bf Input:} Structural system $(\bA, \bB)$ and input cost vector $p_u$
\State \textit{\bf Output:} Input set $\I(\S)$ and cost $p(\I(\S))$
\end{algorithmic}
  \begin{algorithmic}[1]
\State Find all non-top linked SCCs in $\D(\bA)$, $\cN := \cN_1, \ldots,\cN_q$
  \State Define weighted set cover problem as follows:
  \State Universe $\U \leftarrow \{\cN_1, \ldots, \cN_q \}$\label{step:access_uni}
  \State Sets ${\S}_i \leftarrow\{\cN_j: \bB_{ri}=\* \mbox{~and~} x_r\in \cN_j \}$\label{step:access_set}
  \State Weights $w(i) \leftarrow p_u(i)$ for $i \in \{1,\ldots,m\}$\label{step:access_weight}  
 \State Given a cover $\S$ such that $\cup_{\S_i \in \S}\S_i \subseteq \U$, define: 
\State  Weight of the cover $w(\S) \leftarrow \sum_{\S_i \in \S}w_u(i)$\label{step:weightdef1}
\State Define $\I(\S) \leftarrow \{i:\S_i \in \S\}$ \label{step:IP1}
\State Cost of $\I(\S)$, $p(\I(\S)) \leftarrow \sum_{i \in \I(\S)}p_u(i)$ \label{step:cost1}
\end{algorithmic}
\end{algorithm}
 
The pseudo-code showing a reduction of Problem~\ref{prob:access} to an instance of weighted set cover problem is presented in Algorithm~\ref{alg:access1}. Given ($\bA, \bB, p_u$), we define a weighted set cover problem as follows: the universe $\U$ consists of all non-top linked SCCs $\{\cN_1,\ldots,\cN_q\}$ in $\D(\bA)$ (see Step~\ref{step:access_uni}). The Sets $\S_1, \ldots, \S_m$ is defined in such a way that set $\S_i$ consists of all non-top linked SCCs that are covered by the $i^{\rm th}$ input (see Step~\ref{step:access_set}). Further, for each set $\S_i$ we define weight $w(i)$ as shown in Step~\ref{step:access_weight}. 
Given a solution $\S$ to the weighted set cover problem, we define the associated weight $w(\S)$ as the sum of the weights of all sets selected under $\S$ (see Step~\ref{step:weightdef1}). Also, the indices of the sets selected in $\S$ is denoted as $\I(\S)$ and its cost is denoted as $p(\I(\S))$ as shown in Steps~\ref{step:IP1}~and~\ref{step:cost1} respectively. We denote an optimal solution to Problem~\ref{prob:access} as $\I^\*_\A$ and its cost as $p^\*_\A$. Also an optimal solution to the weighted set cover problem given in Algorithm~\ref{alg:access1} is denoted by $\S^\*_\A$ and its weight is denoted by $w^\*_\A$. Now we prove the following preliminary results.

\begin{lem}\label{lem:comp_1}
Consider any structural system $(\bA, \bB, \bC)$, feedback matrix $\bK$ and cost vectors $p_u, p_y$. Then, Algorithm~\ref{alg:access1} reduces Problem~\ref{prob:access} to a weighted set cover problem in $O(n^2)$ time. Moreover, for any cover $\S$, the set $\I(\S)$ and cost $p(\I(\S))$ given in Steps~\ref{step:IP1}~and~\ref{step:cost1} respectively can be obtained in $O(n)$ computations, where $n$ denotes the number of states in the system.
\end{lem}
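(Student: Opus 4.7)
The plan is to establish the $O(n^2)$ bound by auditing the cost of each stage of Algorithm~\ref{alg:access1}. The heart of the argument is Step~1: the SCCs of $\D(\bA)$ can be found via Tarjan or Kosaraju in $O(|V_{\x}| + |E_{\x}|) = O(n^2)$ time, after which condensing them into a DAG and isolating those with zero in-degree yields the non-top linked list $\cN_1,\ldots,\cN_q$ in a further $O(n^2)$. As a byproduct I would store an array indexed by states that records, for each $x_r$, which non-top linked SCC (if any) contains it, so that subsequent queries are $O(1)$.

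For Steps~\ref{step:access_uni}--\ref{step:access_weight}, the universe $\U$ contains $q \le n$ elements and is emitted in $O(n)$. To build each set $\S_i$ in Step~\ref{step:access_set}, I would scan the $i^{\rm th}$ column of $\bB$; every non-zero entry $\bB_{ri}$ triggers a constant-time lookup in the state-to-SCC map and, if applicable, an insertion of that SCC into $\S_i$. This costs $O(n)$ per input for a total of $O(nm) = O(n^2)$ under the standing assumption $m,p = O(n)$ used throughout the paper. Assigning the weights $w(i) = p_u(i)$ in Step~\ref{step:access_weight} is $O(m) = O(n)$. Summing the stages, the reduction runs in $O(n^2)$, as claimed.

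For the second assertion, given a cover $\S$, obtaining $\I(\S) = \{i : \S_i \in \S\}$ in Step~\ref{step:IP1} amounts to reading at most $|\S| \le m$ indices, and the cost in Step~\ref{step:cost1} sums at most $|\I(\S)| \le m$ entries of $p_u$; both are $O(m) = O(n)$. I anticipate no genuine mathematical obstacle: the proof is essentially a careful complexity audit. The only detail requiring attention is to exploit the precomputed state-to-SCC map from Step~1 when executing Step~\ref{step:access_set}, so that constructing all of $\S_1,\ldots,\S_m$ together stays within $O(n^2)$ instead of the $O(n^3)$ that a naive implementation scanning SCCs per entry would incur.
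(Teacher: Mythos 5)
Your proof is correct and follows essentially the same route as the paper's: a direct complexity audit in which the dominant cost is the $O(\max(|V_{\x}|,|E_{\x}|)) = O(n^2)$ SCC computation on $\D(\bA)$, with the remaining steps and the recovery of $\I(\S)$ and $p(\I(\S))$ being linear. You are in fact more careful than the paper, which does not explicitly account for the cost of building the sets $\S_i$ in Step~4; your state-to-SCC lookup table and the implicit assumption $m,p = O(n)$ make that part of the bound rigorous.
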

\begin{proof}
Given state digraph $\D(\bA) = \D(V_X, E_X)$ all the non-top linked SCCs can be found in $O({\rm max}(|V_X|,|E_X|))$ computations. Here $|V_X| = n$ and $|E_X|$ is atmost $|V_X|^2$. Thus the reduction in Algorithm~\ref{alg:access1} has $O(n^2)$ computations. Also, given a cover $\S$ we can obtain $\I(\S)$ and $p(\I(\S))$ in linear time and this completes the proof. 
\end{proof}

\begin{lem}\label{lem:solution}
Consider any structural system $(\bA, \bB, \bC)$, feedback matrix $\bK$ and cost vectors $p_u, p_y$ and the corresponding weighted set cover problem obtained using Algorithm~\ref{alg:access1}. Let $\S$ be a feasible solution to the weighted set cover problem and $\I(\S)$ be the index set selected in Step~\ref{step:IP1}. Then, all states are accessible in $\D(\bA, \bB_{\I(\S)}, \bC, \bK)$ and $p(\I(\S)) = w(\S)$.
\end{lem}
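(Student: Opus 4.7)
The plan is to prove the two conclusions separately by directly unwinding the reduction performed in Algorithm~\ref{alg:access1}; neither direction should require any machinery beyond Corollary~\ref{cor:acc} and the definitions of the universe, sets, and weights in Steps~\ref{step:access_uni}--\ref{step:access_weight}.

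First I would argue accessibility. Since $\S$ is a feasible cover, $\cup_{\S_i\in\S}\S_i = \U = \{\cN_1,\ldots,\cN_q\}$, so every non-top linked SCC $\cN_j$ of $\D(\bA)$ lies in some $\S_i$ with $\S_i\in\S$. By the construction in Step~\ref{step:access_set}, $\cN_j\in\S_i$ means there is a state $x_r\in\cN_j$ with $\bB_{ri}=\*$; equivalently, input $u_i$ covers $\cN_j$ in the sense of Definition~\ref{def:cover}. Since $i\in\I(\S)$ by Step~\ref{step:IP1}, the column $i$ of $\bB$ is retained in $\bB_{\I(\S)}$, so the corresponding edge $(u_i,x_r)\in E_U$ is present in $\D(\bA,\bB_{\I(\S)},\bC,\bK)$. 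Therefore every non-top linked SCC of $\D(\bA)$ is covered by an input in $\bB_{\I(\S)}$, and Corollary~\ref{cor:acc} immediately yields accessibility of every state in $\D(\bA,\bB_{\I(\S)},\bC,\bK)$.

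For the cost equality I would just chase the definitions. By Step~\ref{step:weightdef1}, $w(\S)=\sum_{\S_i\in\S}w(i)$, and by Step~\ref{step:access_weight}, $w(i)=p_u(i)$. Since the map $\S_i\mapsto i$ is a bijection between the sets selected in $\S$ and the indices in $\I(\S)$ (by Step~\ref{step:IP1}), the sum rewrites as $\sum_{i\in\I(\S)}p_u(i)$, which is precisely $p(\I(\S))$ by Step~\ref{step:cost1}.

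I do not anticipate a genuine obstacle: the whole content of the lemma is that the encoding in Algorithm~\ref{alg:access1} is faithful in both directions. The only subtlety worth pausing on is the implicit claim that restricting $\bB$ to the columns in $\I(\S)$ does not destroy any edge we need; this is guaranteed because the set-cover condition fixes, for each $\cN_j$, an index $i\in\I(\S)$ whose corresponding column of $\bB$ supplies the required actuation edge into $\cN_j$.
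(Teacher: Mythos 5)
Your proposal is correct and follows essentially the same route as the paper's own proof: feasibility of $\S$ gives that every non-top linked SCC is covered by some selected input, Corollary~\ref{cor:acc} then yields accessibility, and the cost equality follows by unwinding Steps~\ref{step:access_weight}, \ref{step:weightdef1}, \ref{step:IP1} and~\ref{step:cost1}. Your version merely spells out the edge-retention detail that the paper leaves implicit.
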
  
\begin{proof}
Given $\S$ is a feasible solution to the weighted set cover problem. Thus $\cup_{\S_i \in \S}\S_i = \U$. Hence, $\I(\S) = \{i:\S_i \in \S\}$ covers all the non-top linked SCCs in $\D(\bA)$. By Corollary~\ref{cor:acc} this implies that all states are accessible in $\D(\bA, \bB_{\I(\S)}, \bC, \bK)$. Now steps~\ref{step:access_weight},~\ref{step:weightdef1},~\ref{step:IP1}~and~\ref{step:cost1} of Algorithm~\ref{alg:access1} proves $p(\I(\S)) = w(\S)$. \qed
\end{proof}

In the following lemma we show that an $\epsilon$-approximation algorithm for the weighted set cover problem can be used to obtain an $\epsilon$-approximate solution to Problem~\ref{prob:access}.
\begin{lem}\label{lem:access_opt1}
Consider any structural system $(\bA, \bB, \bC)$, feedback matrix $\bK$ and cost vectors $p_u, p_y$ and the corresponding weighted set cover problem obtained using Algorithm~\ref{alg:access1}. Then, for $\epsilon > 1$, if $\S$ is an $\epsilon$-optimal solution to the weighted set cover problem, then $\I(\S)$ is an  $\epsilon$-optimal solution to the minimum cost accessibility problem. 
\end{lem}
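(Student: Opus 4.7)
The plan is to establish the equality $w^\*_\A = p^\*_\A$ of the two optimal values, from which the $\epsilon$-optimality transfer is immediate. Set $\S^\*_\A$ (weight $w^\*_\A$) to be an optimal weighted set cover and $\I^\*_\A$ (cost $p^\*_\A$) an optimal solution to Problem~\ref{prob:access}. Since $\S$ is $\epsilon$-optimal, $w(\S) \leqslant \epsilon\,w^\*_\A$, and Lemma~\ref{lem:solution} gives both feasibility of $\I(\S)$ for the accessibility problem and the identity $p(\I(\S)) = w(\S)$; so once $w^\*_\A = p^\*_\A$ is in hand we get $p(\I(\S)) \leqslant \epsilon\,p^\*_\A$, as required.

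For one direction, $p^\*_\A \leqslant w^\*_\A$, I would simply apply Lemma~\ref{lem:solution} to the optimal cover $\S^\*_\A$: the induced index set $\I(\S^\*_\A)$ is feasible for the accessibility problem and has cost $w(\S^\*_\A) = w^\*_\A$, so the minimum cost $p^\*_\A$ cannot exceed $w^\*_\A$.

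For the reverse direction, $w^\*_\A \leqslant p^\*_\A$, I would construct a cover from $\I^\*_\A$ by defining $\S(\I^\*_\A) := \{\S_i : i \in \I^\*_\A\}$ using the sets built in Step~\ref{step:access_set} of Algorithm~\ref{alg:access1}. The main thing to verify is that $\S(\I^\*_\A)$ is indeed a feasible cover, i.e.\ $\bigcup_{i \in \I^\*_\A}\S_i = \U = \{\cN_1,\dots,\cN_q\}$. This reduces to showing that every non-top linked SCC $\cN_j$ contains a state actuated by some input in $\I^\*_\A$. Because $\I^\*_\A$ renders all states accessible in $\D(\bA,\bB_{\I^\*_\A},\bC,\bK)$, each state in $\cN_j$ has a simple directed path from some input in $\I^\*_\A$; since $\cN_j$ is non-top linked (no incoming arcs from other SCCs), such a path must enter $\cN_j$ directly from an input edge, giving an input in $\I^\*_\A$ that covers $\cN_j$ per Definition~\ref{def:cover}. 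Hence $\cN_j \in \S_i$ for some $i \in \I^\*_\A$, establishing feasibility. The weight of this cover equals $\sum_{i \in \I^\*_\A} w(i) = \sum_{i \in \I^\*_\A} p_u(i) = p^\*_\A$ by Step~\ref{step:access_weight}, so $w^\*_\A \leqslant p^\*_\A$.

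The main obstacle is this feasibility verification: the statement that accessibility of all states forces every non-top linked SCC to be directly hit by an input relies on the structural observation that incoming paths into a non-top linked SCC can only originate from input nodes, and this is precisely the content of Corollary~\ref{cor:acc} read in the converse direction. Once this equivalence between ``accessibility-feasible input sets'' and ``feasible covers of the non-top linked SCCs'' is articulated cleanly, combining the two inequalities yields $w^\*_\A = p^\*_\A$ and the conclusion follows.
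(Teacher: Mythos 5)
Your proposal is correct and follows essentially the same route as the paper: the paper phrases the equality of optimal values as a contradiction argument (a cheaper accessibility-feasible input set would induce a cheaper cover, contradicting optimality of $\S^\*_\A$), which is logically the same as your two inequalities $p^\*_\A \leqslant w^\*_\A$ and $w^\*_\A \leqslant p^\*_\A$, and both then conclude via Lemma~\ref{lem:solution} and the weight identification in Step~\ref{step:access_weight}. If anything, you are slightly more careful than the paper in justifying that an accessibility-feasible input set yields a feasible cover (the converse of Corollary~\ref{cor:acc}), a step the paper asserts without argument.
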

\begin{proof}
The proof of this lemma is twofold: (i)~we show that an optimal solution $\S^\*_\A$ to the weighted set cover problem gives an optimal solution $\I^\*_\A$ to Problem~\ref{prob:access}, and (ii)~we show that if $w(\S) \leqslant \epsilon\,w^\*_\A$, then $ p(\I(\S)) \leqslant \epsilon\,p^\*_\A$. 

Given $\S^\*_\A$ is an optimal solution to the weighted set cover problem with cost $w^\*_\A$. For (i) we show that input set $\I(\S^\*_\A)$ selected under $\S^\*_\A$ is a minimum cost input set that satisfy the accessibility  of all states, i.e., all states are accessible in $\D(\bA, \bB_{\I(\S^\*_\A)}, \bC, \bK)$ and $p(\I(\S^\*_\A)) = p^\*_\A$. Since $\S^\*_\A$ is a solution to the weighted set cover problem, using Lemma~\ref{lem:solution} all states are accessible in $\D(\bA, \bB_{\I(\S^\*_\A)}, \bC, \bK)$. Thus $\I(\S^\*_\A)$ is a feasible solution to Problem~\ref{prob:access}. To prove minimality, we use a contradiction argument. Let us assume that $\S^\*_\A$ is an optimal solution to the weighted set cover problem but $\I(\S^\*_\A) = \{i:\S_i \in \S^\*_
\A \}$ is not a minimum cost input set that satisfy the accessibility condition. Then there exists $\I'_\A \subseteq \{1,\ldots,m\}$ such that all state nodes are accessible in $\D(\bA, \bB_{\I'_\A}, \bC, \bK)$ and $p(\I'_\A) < p(\I(\S^\*_\A))$. Note that for \mbox{$\S' = \{\S_i : i \in \I'_\A\}$}, $\cup_{\S_i \in \S'}\S_i = \U$. Using Lemma~\ref{lem:solution}, \mbox{$w(\S') < w^\*_\A$}. This gives a contradiction to the assumption that $\S^\*_\A$ is a minimal solution to the weighted set cover problem. This completes the proof of~(i). Now~(ii) follows from Lemma~\ref{lem:solution} and Step~\ref{step:access_weight} of Algorithm~\ref{alg:access1} and this completes the proof. 
\end{proof}
As an immediate consequence of the above result we can now show that approximation algorithm for minimum cost accessibility problem can be obtained from an approximation algorithm for the weighted set cover problem. 
\begin{thm}\label{th:access_1}
If there exists a polynomial time $\epsilon$-optimal algorithm for solving the weighted set cover problem, then there exists a polynomial time $\epsilon$-optimal algorithm for solving Problem~\ref{prob:access}. Thus, we can find a ${\rm log\,}\mu_{\rm max}$-optimal solution to Problem~\ref{prob:access}, where $\mu_{\rm max}$ is the maximum number of non-top linked SCCs covered by a single input.  
\end{thm}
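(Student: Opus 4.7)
The plan is to chain together the three lemmas already proved in this subsection to produce a polynomial time reduction that preserves approximation ratios, and then specialize it using a known approximation result for weighted set cover. First I would argue the ``algorithmic'' part: given an instance $(\bA, \bB, p_u)$ of Problem~\ref{prob:access}, run Algorithm~\ref{alg:access1} to construct the associated weighted set cover instance $(\U, \P, w)$. Lemma~\ref{lem:comp_1} guarantees that this reduction runs in $O(n^2)$ time, so the construction is polynomial in the input size of the minimum cost accessibility problem.

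Next I would invoke the hypothesized polynomial time $\epsilon$-optimal algorithm on this instance to obtain a cover $\S$ with $w(\S) \leqslant \epsilon\, w^{\*}_{\A}$. Applying Lemma~\ref{lem:solution} to $\S$ shows that the recovered index set $\I(\S)$ renders every state accessible in $\D(\bA, \bB_{\I(\S)}, \bC, \bK)$ and satisfies $p(\I(\S)) = w(\S)$; in particular $\I(\S)$ is feasible for Problem~\ref{prob:access}. Lemma~\ref{lem:access_opt1} then upgrades this to $p(\I(\S)) \leqslant \epsilon\, p^{\*}_{\A}$. Recovering $\I(\S)$ from $\S$ takes $O(n)$ by Lemma~\ref{lem:comp_1}, so the total running time is the sum of the reduction time, the running time of the set cover approximation, and the $O(n)$ recovery step, which is polynomial whenever the set cover algorithm is.

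For the second claim, I would specialize $\epsilon$ using Chv\'atal's classical greedy algorithm for weighted set cover \cite{Chv:79}, which is a polynomial time $\log s_{\max}$-approximation where $s_{\max}$ is the cardinality of the largest set in the instance. By Step~\ref{step:access_set} of Algorithm~\ref{alg:access1}, the set $\S_i$ associated with input $u_i$ consists precisely of the non-top linked SCCs covered by $u_i$, so $|\S_i| = \mu_i$ and therefore $s_{\max} = \mu_{\rm max}$. Plugging $\epsilon = \log \mu_{\rm max}$ into the first part yields the stated $\log \mu_{\rm max}$-optimal solution to Problem~\ref{prob:access}.

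I do not expect a real obstacle in this proof because all the structural work has already been carried out in Lemmas~\ref{lem:comp_1}, \ref{lem:solution} and~\ref{lem:access_opt1}; the theorem is essentially a bookkeeping statement that composes these lemmas with a black-box set cover approximator. The only minor subtlety is ensuring that the approximation guarantee transfers through the cost-preserving map $\S \mapsto \I(\S)$, and this is exactly what Lemma~\ref{lem:access_opt1} was set up to provide.
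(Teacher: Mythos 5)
Your proposal is correct and follows essentially the same route as the paper: the paper's proof simply cites Lemma~\ref{lem:access_opt1} for the ratio-preserving transfer and then applies Chv\'atal's greedy algorithm, whose $\log d$ guarantee (with $d$ the largest set cardinality, here $\mu_{\rm max}$ by Step~\ref{step:access_set}) yields the stated bound. Your version just spells out the bookkeeping (reduction time via Lemma~\ref{lem:comp_1}, feasibility via Lemma~\ref{lem:solution}) that the paper leaves implicit.
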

\begin{proof}
From Lemma~\ref{lem:access_opt1}, a polynomial time $\epsilon$-optimal algorithm for solving the weighted set cover problem gives a polynomial time $\epsilon$-optimal algorithm for solving Problem~\ref{prob:access}. Now, using the greedy approximation algorithm for solving the weighted set cover problem given in \cite[pp.234]{Chv:79}, we can obtain a ${\rm log\,}\mu_{\rm max}$-optimal solution to Problem~\ref{prob:access}.  
\end{proof}

Note that through Algorithm~\ref{alg:access1} we have shown that any instance of Problem~\ref{prob:access} can be reduced in polynomial time to an instance of the weighted set cover problem. Now, we prove constant factor inapproximability of Problem~\ref{prob:access}. That is, there does not exist any polynomial time algorithm that give $\epsilon$-optimal solution to Problem~\ref{prob:access} for any $\epsilon > 1$. To achieve this we give a polynomial time reduction of the weighted set cover problem to an instance of Problem~\ref{prob:access} in Algorithm~\ref{alg:access2}. Using this, we will show that any polynomial time $\epsilon$-optimal algorithm for solving Problem~\ref{prob:access} can be used to get polynomial time $\epsilon$-optimal algorithm for the weighted set cover problem. Thus, since weighted set cover problem cannot be approximated up to constant factor, Problem~\ref{prob:access} also cannot be approximated up to constant factor.

\begin{algorithm}[t]
  \caption{Pseudo-code for reducing the weighted set cover problem to a minimum cost accessibility problem 
  \label{alg:access2}}
  \begin{algorithmic}
\State \textit {\bf Input:} Weighted set cover problem with universe $\U= \{1,\ldots, N\}$, sets $\P=\{\S_1,\ldots, \S_r\}$ and weight function $w$ 
\State \textit{\bf Output:} Structural system $(\bA, \bB)$ and input cost vector $p_u$ 
\end{algorithmic}
  \begin{algorithmic}[1]
  \State Define a minimum cost accessibility problem instance with $\bA \in \{0, \*\}^{N \times N}$, $\bB \in \{0, \* \}^{N \times r}$ and cost vector $p_u$ as follows:
  \State  $\bA_{ij} \leftarrow \begin{cases}
    \*$, for $i=j, \label{step:access_A}\\
    0, \mbox{~otherwise~}.
  \end{cases} $
   \State  $\bB_{ij} \leftarrow \begin{cases}
    \*$, for $i \in \S_j, \label{step:access_B}\\
    0, \mbox{~otherwise~}.
  \end{cases} $ 
  \State $p_u(i) \leftarrow w(i)$, for $i \in \{1,\ldots,r\}$\label{step:access_ip}
 \State Given a set $\I$ such that all states are accessible in $\D(\bA, \bB_{\I}, \bC, \bK)$, define: 
\State  Cost of the set $p(\I) \leftarrow \sum_{i \in \I}p_u(i)$,\label{step:weightdef2}
\State Define $\S(\I) \leftarrow \{\S_i: i \in \I\}$, \label{step:IP2}
\State Weight of $\S(\I)$, $w(\S(\I)) \leftarrow \sum_{\S_i \in \S(\I)}w(i)$ \label{step:cost2}.
\end{algorithmic}
\end{algorithm}
The pseudo-code showing a reduction of the weighted set cover problem to an instance of Problem~\ref{prob:access} is presented in Algorithm~\ref{alg:access2}. Given $\U, \P$ and $w$, we reduce the weighted set cover problem to an instance of the minimum cost accessibility problem. Here, $\bA$ is a diagonal $N \times N$ matrix with all diagonal entries $\*$'s (see Step~\ref{step:access_A}). Now, $\bB$ is defined in such a way that its $j^{\rm th}$ column corresponds to the set $\S_j$ (see Step~\ref{step:access_B}) and cost of $j^{\rm th}$ input is same as the weight $w(j)$ of $\S_j$ (see Step~\ref{step:access_ip}).    
Given a solution $\I$ to the accessibility problem, we define the associated cost $p(\I)$, the sets selected $\S(\I)$ and its weight $w(\S(\I))$ as shown in Steps~\ref{step:weightdef2},\ref{step:IP2}~and~\ref{step:cost2} respectively. We denote an optimal solution to the set cover problem in Algorithm~\ref{alg:access2} as $\S^\*$ and its weight as $w^\*$. Now we prove the following preliminary results.

\begin{lem}\label{lem:comp_2}
Consider any weighted set cover problem with universe $\U$, set $\P$ and weight $w$. Let $|\U| = N$. Then, Algorithm~\ref{alg:access2} reduces the weighted set cover problem to Problem~\ref{prob:access} in $O(N^2)$ computations. Moreover, for any set $\I$, the cover $\S(\I)$ and weight $w(\S(\I))$ given in Steps~\ref{step:IP2}~and~\ref{step:cost2} respectively can be obtained in $O(N)$ computations.
\end{lem}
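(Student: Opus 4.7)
The plan is to verify the computational bounds by walking through each step of Algorithm~\ref{alg:access2}, mirroring the size-accounting used for Lemma~\ref{lem:comp_1}. Because the reduction is just a direct matrix construction plus a weight copy, the work is essentially bookkeeping and no substantive argument is needed.

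First I would analyze the construction of $(\bA, \bB, p_u)$. Step~\ref{step:access_A} writes an $N \times N$ diagonal matrix $\bA$ in $O(N^2)$ time. Step~\ref{step:access_B} builds $\bB \in \{0,\*\}^{N \times r}$ by placing a $\*$ at position $(i,j)$ whenever $i \in \S_j$; this can be done with a single pass over the set-membership relation, giving $O(Nr)$ in the worst case. Step~\ref{step:access_ip} is a straight copy of the $r$ weights, costing $O(r)$. To collapse these to the claimed $O(N^2)$, I would invoke the convention that $r$ is polynomial in $N$, so that $r = O(N)$ for the purpose of a polynomial reduction; this is the same sort of convention implicit in Lemma~\ref{lem:comp_1}, where the number of inputs $m$ was absorbed into the $O(n^2)$ bound.

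For the second claim, given any $\I \subseteq \{1, \ldots, r\}$, producing $\S(\I) = \{\S_i : i \in \I\}$ takes a single pass over $\I$, and $w(\S(\I)) = \sum_{i \in \I} w(i)$ is a sum of $|\I|$ weights. Since $\bA$ is diagonal, every state is its own SCC and is both non-top and non-bottom linked, so any minimal feasible $\I$ satisfying accessibility in $\D(\bA, \bB_{\I}, \bC, \bK)$ has $|\I| \leqslant N$; restricting to this case, which is all that is required by the downstream approximation argument, yields the claimed $O(N)$ bound for both $\S(\I)$ and $w(\S(\I))$. The only real subtlety here is cosmetic, namely making explicit the convention that $N$ dominates $r$; once that is fixed, every step is immediate and no genuinely hard estimate is anticipated.
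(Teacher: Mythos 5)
Your proposal is correct and follows essentially the same route as the paper: a direct step-by-step accounting of the construction of $\bA$, $\bB$, $p_u$ and of the recovery of $\S(\I)$ and $w(\S(\I))$. If anything, you are more careful than the paper, which simply asserts $O(N^2)$ for building $\bB$ and linear time for the rest without making explicit the dependence on the number of sets $r$ that you flag and absorb via the convention $r = O(N)$.
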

\begin{proof}
Given any weighted set cover problem $\U, \P, w$, matrices $\bA$, $\bB$ can be found in $O(N)$, $O(N^2)$ computations respectively. Also, cost vector $p_u$ can be found in linear time. Thus the reduction of the set cover problem to an instance of Problem~\ref{prob:access} given in Algorithm~\ref{alg:access2} has $O(N^2)$ computations. Also, given a set $\I$ we can obtain $\S(\I)$ and $w(\S(\I))$  in linear time and this completes the proof. 
\end{proof}

\begin{lem}\label{lem:solution2}
Consider any weighted set cover problem given by $\U, \P, w$ and the corresponding structural system obtained using Algorithm~\ref{alg:access2}. Let $\I$ be a feasible solution to Problem~\ref{prob:access} and $\S(\I)$ consists of the sets selected under $\I$. Then, $\S(\I)$ covers $\U$ and $w(\S(\I)) = p(\I) $.
\end{lem}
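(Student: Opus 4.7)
The plan is to unpack the reduction in Algorithm~\ref{alg:access2} and read off the two required conclusions directly from the structure it imposes on $(\bA, \bB)$. First I would examine the state digraph $\D(\bA)$ of the reduced instance. Since Step~\ref{step:access_A} makes $\bA$ a diagonal matrix with $\*$ on every diagonal entry, $\D(\bA)$ consists of $N$ isolated vertices $x_1,\dots,x_N$, each carrying a self-loop and no other edges. Consequently every state is its own SCC, and since no inter-SCC edges exist, every such SCC is simultaneously non-top linked and non-bottom linked. Thus the collection of non-top linked SCCs is exactly $\{\{x_1\},\dots,\{x_N\}\}$, which is in canonical one-to-one correspondence with the universe $\U = \{1,\dots,N\}$.

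Next I would translate accessibility into set coverage. By Step~\ref{step:access_B}, input $u_j$ has an edge to state $x_i$ in $\D(\bA,\bB,\bC,\bK)$ if and only if $i \in \S_j$. Since each state $x_i$ is an isolated SCC, the only way for $x_i$ to be accessible in $\D(\bA, \bB_\I, \bC, \bK)$ is for some $j \in \I$ to have a direct edge to $x_i$, i.e., $i \in \S_j$. Feasibility of $\I$ for Problem~\ref{prob:access} therefore forces that for every $i \in \U$ there exists $j \in \I$ with $i \in \S_j$, which is precisely the statement $\bigcup_{\S_j \in \S(\I)} \S_j = \U$. Hence $\S(\I)$ is a set cover of $\U$.

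Finally, the equality of objective values follows by direct substitution. By Step~\ref{step:access_ip} we have $p_u(i) = w(i)$ for every $i \in \{1,\dots,r\}$, so
\[
p(\I) \;=\; \sum_{i \in \I} p_u(i) \;=\; \sum_{i \in \I} w(i) \;=\; \sum_{\S_i \in \S(\I)} w(i) \;=\; w(\S(\I)),
\]
where the third equality uses the bijection $i \leftrightarrow \S_i$ built into Step~\ref{step:IP2}. There is no real obstacle here: the reduction was engineered so that accessibility and coverage literally coincide, and the cost vector was defined to match the weight function entry-by-entry. The only point that requires a moment of care is verifying that the diagonal structure of $\bA$ indeed makes each singleton $\{x_i\}$ a non-top linked SCC, so that Corollary~\ref{cor:acc} applies in both directions; this is immediate from Definition~\ref{def:scc} since the DAG obtained by condensation has no edges at all.
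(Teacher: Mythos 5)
Your proof is correct and follows essentially the same route as the paper: unpack the reduction of Algorithm~\ref{alg:access2}, observe that feasibility (accessibility of all states) is equivalent to $\S(\I)$ covering $\U$, and read off $w(\S(\I)) = p(\I)$ from the entrywise identification $p_u(i) = w(i)$. If anything, your version is slightly more careful than the paper's, which justifies the direction ``accessibility $\Rightarrow$ coverage'' by citing Corollary~\ref{cor:acc} (stated only as coverage $\Rightarrow$ accessibility), whereas you supply the needed converse explicitly via the observation that the diagonal $\bA$ makes every state an isolated SCC, so accessibility forces a direct input edge.
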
  
\begin{proof}
Given $\I$ is a feasible solution to Problem~\ref{prob:access}. Thus all states are accessible in 
$\D(\bA, \bB_\I, \bC, \bK)$. This implies for $\S(\I) = \{\S_i: i \in \I\}$, $\cup_{\S_i \in \S(\I)}\S_i = \U$. Thus by Corollary~\ref{cor:acc} $\S(\I)$ covers $\U$.  Now Steps~\ref{step:access_ip},~\ref{step:weightdef2},~\ref{step:IP2}~and~\ref{step:cost2} of Algorithm~\ref{alg:access2} gives $w(\S(\I)) = p(\I) $.
\end{proof}
In the following lemma we show that an $\epsilon$-approximation algorithm for Problem~\ref{prob:access} can be used to obtain an $\epsilon$-approximate solution to the weighted set cover problem.
\begin{lem}\label{lem:access_opt2}
Consider any weighted set cover problem and the corresponding structural system $(\bA, \bB, p_u)$ obtained using Algorithm~\ref{alg:access2}. For $\epsilon > 1$, if $\I$ is an $\epsilon$-optimal solution to the minimum cost accessibility problem, then $\S(\I)$ is an  $\epsilon$-optimal solution to the weighted set cover problem. 
\end{lem}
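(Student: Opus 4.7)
The plan is to mirror the two-step structure of Lemma~\ref{lem:access_opt1}, exploiting the tight correspondence built by Algorithm~\ref{alg:access2}. In the reduced instance, $\bA$ is diagonal, so each state $x_i$ forms its own SCC with no edges to or from other states; in particular every such SCC is non-top linked. Together with the fact that $\bB_{ij} = \*$ iff $i \in \S_j$ (Step~\ref{step:access_B}) and $p_u(j) = w(j)$ (Step~\ref{step:access_ip}), the accessibility requirement on $(\bA, \bB_{\I})$ becomes exactly the requirement that $\S(\I) = \{\S_j : j \in \I\}$ covers $\U$, with matching cost and weight. This structural equivalence is the engine driving the whole argument.

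First I would prove that an optimal solution $\I^\*_\A$ to Problem~\ref{prob:access} induces an optimal set cover $\S(\I^\*_\A)$ of weight $w^\*$. Lemma~\ref{lem:solution2} gives feasibility and $w(\S(\I^\*_\A)) = p(\I^\*_\A) = p^\*_\A$. For optimality, I argue by contradiction: if there were $\S'$ with $\cup_{\S_i \in \S'}\S_i = \U$ and $w(\S') < w(\S(\I^\*_\A))$, then $\I' = \{i : \S_i \in \S'\}$ would cover every non-top linked SCC of the reduced system (each such SCC is a singleton $\{x_i\}$ with $i$ belonging to some $\S_j \in \S'$), so by Corollary~\ref{cor:acc} all states are accessible in $\D(\bA, \bB_{\I'}, \bC, \bK)$. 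Steps~\ref{step:access_ip},~\ref{step:weightdef2}~and~\ref{step:cost2} then give $p(\I') = w(\S') < p^\*_\A$, contradicting the optimality of $\I^\*_\A$. Hence $w^\* = p^\*_\A$.

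Second, suppose $\I$ is an $\epsilon$-optimal solution to Problem~\ref{prob:access}, i.e., $p(\I) \leqslant \epsilon\, p^\*_\A$. By Lemma~\ref{lem:solution2}, $\S(\I)$ covers $\U$ and $w(\S(\I)) = p(\I)$. Combining this with the equality $w^\* = p^\*_\A$ established in the previous step yields
\[
w(\S(\I)) = p(\I) \;\leqslant\; \epsilon\, p^\*_\A \;=\; \epsilon\, w^\*,
\]
so $\S(\I)$ is $\epsilon$-optimal for the weighted set cover instance.

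The main obstacle, and the only non-routine part, is cleanly justifying the bidirectional accessibility-to-cover correspondence under the reduction. This reduces to the observation that diagonality of $\bA$ forces every state to be its own SCC, every such SCC to be non-top linked, and ``covered by input $j$'' to be synonymous with ``index belongs to $\S_j$''. Once this is in place, Corollary~\ref{cor:acc} applies in both directions and both the optimality transfer and the $\epsilon$-optimality transfer follow directly from the cost/weight matching guaranteed by Algorithm~\ref{alg:access2}.
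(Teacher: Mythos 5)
Your proposal is correct and follows essentially the same route as the paper's proof: feasibility and the cost--weight equality via Lemma~\ref{lem:solution2}, optimality of $\S(\I^\*_\A)$ by the same contradiction argument, and then the $\epsilon$-transfer through $w^\* = p^\*_\A$. Your explicit justification of the singleton-SCC correspondence from the diagonal $\bA$ is a welcome (if minor) elaboration of a step the paper leaves implicit.
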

\begin{proof}
The proof of this lemma is twofold: (i)~we show that an optimal solution $\I^\*_\A$ to Problem~\ref{prob:access} gives an optimal solution $\S^\*_\A$ to the weighted set cover problem, and (ii)~we show that, if $p(\I) \leqslant \epsilon\,p^\*_\A$, then $w(\S(\I)) \leqslant \epsilon\,w^\*_\A$.

For proving (i)~we assume that $\I^\*_\A$ is an optimal solution to Problem~\ref{prob:access} and then prove that $\S(\I^\*_\A)$ is an optimal solution to the weighted set cover problem, i.e, $\cup_{\S_i \in \S(\I^\*_\A)} = \U$ and  $w(\S(\I^\*_\A)) = w^\*_\A$. Given $\I^\*_\A$ is an optimal solution to Problem~\ref{prob:access}. Thus all states are accessible in $\D(\bA, \bB_{\I^\*_\A}, \bC, \bK)$. Hence, by Lemma~\ref{lem:solution2}, $\S(\I^\*_\A)$ is a feasible solution to the weighted set cover problem. Now we prove optimality using a contradiction argument. Let $\I^\*_\A$ is an optimal solution to Problem~\ref{prob:access}, but $\S(\I^\*_\A)$ is not an optimal solution to the weighted set cover problem. Then there exists $\widetilde{\S} \subset \{\S_1,\ldots,\S_r\}$ such that $\cup_{\S_i \in \widetilde{\S}}\S_i = \U$ and $ w(\widetilde{\S}) <  w(\S(\I^\*_\A))$. Then $\widetilde{\I} = \{i:\S_i \in \widetilde{\S}\}$ covers all the non-top linked SCCs in $\D(\bA)$. Also, from Lemma~\ref{lem:solution}, $p(\widetilde{\I}) < p^\*_\A$. This gives a contradiction to the assumption that $\I^\*_\A$ is a minimum cost input set  that satisfies accessibility condition. This completes the proof of~(i). Now (ii)~follows directly from Lemma~\ref{lem:solution2} and Step~\ref{step:access_ip} of Algorithm~\ref{alg:access2}. This completes the proof.
\end{proof}
Lemmas~\ref{lem:access_opt1}~and~\ref{lem:access_opt2} prove the equivalence of Problem~\ref{prob:access} and the weighted set cover problem. There are  no polynomial algorithms for solving weighted set cover problem unless P~=~NP. However, there exist various approximation algorithms that find approximate solution to the weighted set cover problem. Specifically, the greedy approximation algorithm given in \cite{Chv:79} gives a log$\,d$ approximation, where $d$ is the cardinality of the largest set $\S_i$ in $\P$. 
In addition to this, we also know strong negative approximability result for the set cover problem. The set cover problem is a special case of weighted set cover problem, where all weights are non-zero and uniform. Thus the inapproximability result of the set cover problem applies to the weighted set cover problem also.
\begin{prop}\cite[Theorem 4.4]{Fei:98}\label{prop:inapprox}
If there is some $\epsilon > 0$ such that a polynomial time algorithm can approximate the set cover problem within $(1-\epsilon)\,{\rm log}\,L$, then $NP \subset NTIME(L^{{\rm log\, log\,}\,L})$, where $L$ denotes the number of items in the universe.
\end{prop}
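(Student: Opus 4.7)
The plan is to reproduce the argument of Feige, which combines a multi-prover interactive proof system for $3$-SAT with a combinatorial gadget (a partition system) to produce a gap preserving reduction to Set Cover.

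First, I would invoke a $k$-prover one-round proof system for $3$-SAT with perfect completeness and soundness at most $2^{-\Omega(k)}$. Such a system is obtained by applying Raz's parallel repetition theorem to the two-prover system that follows from the PCP theorem. The verifier uses $O(\log n + k)$ random bits, queries each prover with a string of length polynomial in $k$, and computes its accept/reject decision by a local consistency check on the answers.

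Next, I would build the set cover instance. For each of the $R$ possible random strings of the verifier, place one independent copy of a partition system: an $m$-element universe together with $L$ partitions into $k$ blocks, constructed (probabilistically, or explicitly when available) so that any sub-collection of blocks drawn from fewer than $(1-\epsilon)\ln L$ distinct partitions fails to cover the universe. For each possible prover answer, form a single set that, in every random-string copy where this answer could be given in response to some query, selects the block indexed by that answer. The resulting universe has size $N = Rm$, and a suitable choice of $k$ balances the parameters so that $\log L$ is essentially $\log N$ up to the double-logarithmic loss in the conclusion.

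The two halves of the gap then follow. In the completeness case, an assignment satisfying the $3$-SAT instance induces an honest prover strategy which selects $k$ sets per random string, giving a cover of size $kR$. In the soundness case, any cover of size strictly less than $(1-\epsilon)(\ln L)\,kR$ would, by averaging over random strings, restrict to a small cover in a non-negligible fraction of the copies; by the partition system's covering bound, that small cover must use at least $(1-\epsilon)\ln L$ distinct blocks within a single partition, which can be decoded into randomized prover answers accepted by the verifier with probability exceeding $2^{-\Omega(k)}$, contradicting soundness. The hardest step is precisely this soundness-to-decoding translation: one must show that any efficient cover is forced to be ``almost consistent'' on most random strings, and then extract consistent answer distributions for the provers without losing more than the allowed factor in acceptance probability. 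Tuning $k = \Theta(\log \log N)$ yields the claimed $(1-\epsilon)\log L$ hardness under $NP \not\subset NTIME(L^{\log\log L})$.
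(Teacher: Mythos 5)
The paper offers no proof of this proposition: it is imported verbatim as Theorem~4.4 of Feige's paper \cite{Fei:98} and used purely as a black box to derive the inapproximability statements in Theorem~\ref{th:access_2}, Corollary~\ref{cor:setcover2} and Theorem~\ref{th:main}. So the only meaningful comparison is against Feige's own argument, and at the level of architecture your outline matches it: a $k$-prover one-round system for a gap version of 3-SAT with soundness $2^{-\Omega(k)}$ (via the PCP theorem and Raz's parallel repetition), one partition system per random string of the verifier, sets indexed by (prover, query, answer) that select blocks across all relevant copies, completeness giving a cover of size $kR$, and a soundness-to-decoding argument; the choice $k = \Theta(\log\log N)$ is indeed what produces the $NTIME(L^{\log\log L})$ conclusion rather than $P \neq NP$.

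Two steps in your sketch are, however, wrong or missing as written. First, the covering property of the partition system is misstated: you require that ``any sub-collection of blocks drawn from fewer than $(1-\epsilon)\ln L$ distinct partitions fails to cover the universe,'' but this is false for any partition system --- the $k$ blocks of a \emph{single} partition always cover the universe. The property actually needed is that any cover containing no two blocks from the same partition must have size at least $d \approx (1-\epsilon')\,k\ln m$, where $m$ is the number of points of the partition system (so the gap $k$ versus $(1-\epsilon')k\ln m$ is measured against $\ln m \approx \ln N$, not against $\ln L$ with $L$ the number of partitions). Second, and consequently, the decoding step is not that a small cover ``must use at least $(1-\epsilon)\ln L$ distinct blocks within a single partition''; it is that a cover using fewer than $d$ blocks in some copy must contain \emph{two blocks from the same partition associated with different provers}, and these two blocks decode into a pair of provers whose answers are mutually consistent. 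This forces the proof system to be sound against the weak, pairwise-consistency acceptance predicate (Feige's provers need only agree two at a time), which is the technical heart of the theorem and is absent from your sketch; with the naive ``all $k$ answers accepted'' event the averaging argument does not yield a probability large enough to contradict the $2^{-\Omega(k)}$ soundness. The approach is the right one --- it is Feige's --- but as stated the partition-system step does not hold and the soundness extraction is only named, not carried out.
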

Using Lemma~\ref{lem:access_opt2} and Proposition~\ref{prop:inapprox} we can now show that inapproximability result of the weighted set cover problem implies inapproximability result of Problem~\ref{prob:access}.
\begin{thm}\label{th:access_2}
If there does not exist a polynomial time $\epsilon$-optimal algorithm for solving the weighted set cover problem, then there does not exist a polynomial time $\epsilon$-optimal algorithm for solving Problem~\ref{prob:access}. Moreover, there does not exist a polynomial time algorithm that can approximate Problem~\ref{prob:access} to factor $(1-o(1))\,{\rm log\,}q$, where $q$ denotes the number of non-top linked SCCs in $\D(\bA)$.
\end{thm}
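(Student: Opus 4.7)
The plan is to prove Theorem~\ref{th:access_2} by contrapositive, leveraging the reduction in Algorithm~\ref{alg:access2} together with Lemmas~\ref{lem:comp_2}~and~\ref{lem:access_opt2}. The argument essentially runs Algorithm~\ref{alg:access1}'s logic in reverse: since we already know that a weighted set cover instance can be embedded as a minimum cost accessibility instance in polynomial time, any approximation guarantee for accessibility transfers back to set cover.

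For the first statement, I would argue by contrapositive. Suppose, for contradiction, that there exists a polynomial time $\epsilon$-optimal algorithm $\mathcal{A}$ for Problem~\ref{prob:access}. Given an arbitrary weighted set cover instance $(\U, \P, w)$, I would first invoke Algorithm~\ref{alg:access2} to produce a structural system $(\bA, \bB, p_u)$; by Lemma~\ref{lem:comp_2} this takes $O(N^2)$ time where $N=|\U|$. Then I run $\mathcal{A}$ on $(\bA, \bB, p_u)$ to obtain a feasible set $\I$ with $p(\I) \leqslant \epsilon\, p^\*_\A$, and finally return $\S(\I)$ as computed in Step~\ref{step:IP2}. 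Lemma~\ref{lem:access_opt2} then guarantees that $\S(\I)$ is an $\epsilon$-optimal solution to the weighted set cover instance. The composition of these three polynomial time steps yields a polynomial time $\epsilon$-optimal algorithm for weighted set cover, contradicting the hypothesis.

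For the second statement (the explicit $(1-o(1))\log q$ bound), the key observation is that in the reduction of Algorithm~\ref{alg:access2} the matrix $\bA$ is purely diagonal (Step~\ref{step:access_A}), so $\D(\bA)$ consists of $N$ self-looped vertices with no inter-vertex edges. Consequently each state is its own SCC, and every SCC is non-top linked, giving $q=N=|\U|$. So any polynomial time algorithm approximating Problem~\ref{prob:access} to factor $(1-o(1))\log q$ would, via the reduction of Algorithm~\ref{alg:access2} and Lemma~\ref{lem:access_opt2}, yield a polynomial time $(1-o(1))\log N$-approximation for the (unweighted) set cover problem, which Proposition~\ref{prop:inapprox} rules out under the standard complexity assumption $NP \not\subset NTIME(N^{\log\log N})$.

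The main obstacle I expect is not the reduction machinery itself (which is already established by the preceding lemmas) but rather the careful bookkeeping showing that $q=N$ exactly in the image of the reduction, so that a $(1-o(1))\log q$ bound on the accessibility side really translates to a $(1-o(1))\log N$ bound on the set cover side with no slack. A secondary subtlety is that Proposition~\ref{prop:inapprox} is stated for unweighted set cover, so I would need to note briefly that the unweighted problem is a special case of the weighted one (with all weights equal to $1$), and that Algorithm~\ref{alg:access2} applied to an unweighted instance produces a uniform cost vector $p_u$, so the inapproximability transfers intact.
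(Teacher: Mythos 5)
Your proposal is correct and follows essentially the same route as the paper: Lemma~\ref{lem:access_opt2} transfers any polynomial-time $\epsilon$-approximation for Problem~\ref{prob:access} back to weighted set cover via Algorithm~\ref{alg:access2}, and Proposition~\ref{prop:inapprox} then rules out the $(1-o(1))\log q$ factor. Your explicit bookkeeping that the diagonal $\bA$ forces $q=N$ in the image of the reduction, and your remark on the unweighted-to-weighted transfer of the hardness result, are in fact slightly more careful than the paper's own one-line justification of the same facts.
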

\begin{proof}
From Lemma~\ref{lem:access_opt2}, a polynomial time $\epsilon$-optimal algorithm for solving Problem~\ref{prob:access} gives a polynomial time $\epsilon$-optimal algorithm for solving the weighted set cover problem. Now, from Proposition~\ref{prop:inapprox} weighted set cover problem cannot be approximated up to factor $(1-O(1))\,{\rm log\,}N$, where $N$ is the cardinality of the universe. The weighted set cover reduction of Problem~\ref{prob:access} has $|\U| = q$. Thus Problem~\ref{prob:access} cannot be approximated to factor $(1-o(1))\,{\rm log\,}q$.  
\end{proof}

This shows the hardness of the problem. The number of non-top linked SCCs is atmost $n$. This happens when each state is decoupled. However, in practical cases the states are not decoupled. The more connected the graph is, the number of non-top linked SCCs are less. In such cases the above result gives a tighter bound. In the following sub-section we discuss briefly about the minimum cost sensability problem.
\subsection{Solving Minimum Cost Sensability Problem}\label{sec:dil}
In this section, we establish a relation between the sensability condition for structural observability and a set cover problem. Specifically, we show that when the outputs are constrained and each output is associated with a cost, then satisfying minimum cost sensability condition is equivalent to solving a weighted set cover problem defined on the structural system.

Consider a structural system ($\bA, \bC$) and a cost vector $p_y$ denoted as ($\bA, \bC, p_y$). This system is said to satisfy the minimum cost sensability condition if all the non-bottom linked SCCs in $\D(\bA)$ are covered by the least cost output set possible. 
That is, we need to find a set of outputs $\J^\*_\A \subseteq \{1,\ldots,p\}$ such that all state nodes are sensable in $\D(\bA,\bB, \bC_{\J^\*_\A}, \bK)$ and $p(\J^\*_\A) \leqslant p(\J_\A)$ for any $\J_\A \subseteq \{1,\ldots,p\}$ that satisfy sensability of all state nodes in $\D(\bA, \bB, \bC_{\J_\A}, \bK)$. We refer to the above problem as {\it the minimum cost sensability problem}.

 However, because of duality between controllability and observability solving minimum cost sensability problem is equivalent to solving minimum cost accessability problem of the structural system $(\bA^T, \bC^T, p_y)$. Thus the weighted set cover reformulation of Problem~\ref{prob:access} for $(\bA^T, \bC^T, p_y)$ solves the minimum cost sensability problem of $(\bA, \bC, p_y)$. Hence the following result immediately follows from the analysis done in the previous sub-section. 

\begin{cor}\label{cor:setcover2}
Consider a structurally observable system ($\bA, \bC, p_y$). We can find a ${\rm log\,}\eta_{\rm max}$-optimal solution to the minimum cost sensability problem, where $\eta_{\rm max}$ is the maximum number of non-bottom linked SCCs covered by a single output. Also, there does not exist polynomial time algorithm that can approximate minimum cost sensability problem to factor $(1-o(1)))\,{\rm log\,}k$, where $k$ is the number of non-bottom linked SCCs in $\D(\bA)$. 
\end{cor}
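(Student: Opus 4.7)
The plan is to lift the two parts of Corollary~\ref{cor:setcover2} directly from Theorems~\ref{th:access_1}~and~\ref{th:access_2} via the standard controllability/observability duality, namely by reducing the sensability problem on $(\bA,\bC,p_y)$ to the accessibility problem on the transposed system $(\bA^T,\bC^T,p_y)$, with $\bC^T$ playing the role of the input matrix $\bB$.

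First I would verify that the graph-theoretic notions translate correctly under transposition. Reversing all edges of $\D(\bA)$ yields $\D(\bA^T)$ and reverses every directed path, so the SCCs of $\D(\bA)$ and $\D(\bA^T)$ have the same vertex partition, while a non-bottom linked SCC of $\D(\bA)$ (one with no outgoing edge to another SCC) becomes a non-top linked SCC of $\D(\bA^T)$ (one with no incoming edge from another SCC), and vice versa. Moreover, an output $y_j$ covers an SCC of $\D(\bA)$ (in the sense of Definition~\ref{def:cover}) if and only if some state in that SCC has $\bC_{ji}=\*$; after transposition this is exactly the condition that the $j^{\rm th}$ column of $\bC^T$ has a $\*$ entry at a row in the corresponding SCC of $\D(\bA^T)$, i.e. that the ``input'' column covers the corresponding non-top linked SCC of $\D(\bA^T)$. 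Thus a state in $\D(\bA,\bB,\bC_{\J},\bK)$ is sensable if and only if it is accessible in $\D(\bA^T,\bC^T_{\J},\bB^T,\bK^T)$, and the minimum cost sensability problem on $(\bA,\bC,p_y)$ and the minimum cost accessibility problem on $(\bA^T,\bC^T,p_y)$ coincide as optimization problems, with identical costs and identical feasible index sets $\J$.

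With this equivalence in hand, the upper bound follows immediately: applying Theorem~\ref{th:access_1} to $(\bA^T,\bC^T,p_y)$ yields a polynomial time algorithm that produces a $\log\mu_{\max}$-optimal solution, where $\mu_{\max}$ for the transposed instance is, by the translation above, exactly $\eta_{\max}$ (the maximum number of non-bottom linked SCCs in $\D(\bA)$ covered by a single output). The inapproximability result follows in the same way from Theorem~\ref{th:access_2}: since $\D(\bA^T)$ has $k$ non-top linked SCCs (equal to the number of non-bottom linked SCCs of $\D(\bA)$), any polynomial time algorithm approximating the sensability problem to factor $(1-o(1))\log k$ could be composed with the transposition to approximate the accessibility problem on $(\bA^T,\bC^T,p_y)$ to the same factor, contradicting Theorem~\ref{th:access_2}.

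The main obstacle, and really the only thing that requires care, is the bookkeeping for the duality: one must make sure that the correspondence not only maps the feasibility constraints (covering all non-top vs. non-bottom linked SCCs) bijectively, but also preserves the objective value exactly, so that $\epsilon$-optimal solutions map to $\epsilon$-optimal solutions in both directions. Because the cost vector $p_y$ is carried through unchanged and each output index labels the same column in $\bC$ and row in $\bC^T$, this preservation is immediate, and no loss occurs in either the approximation guarantee or the inapproximability reduction.
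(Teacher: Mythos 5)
Your proposal is correct and follows essentially the same route as the paper: the paper also proves Corollary~\ref{cor:setcover2} by invoking controllability/observability duality to identify the minimum cost sensability problem on $(\bA,\bC,p_y)$ with the minimum cost accessibility problem on $(\bA^T,\bC^T,p_y)$ and then importing Theorems~\ref{th:access_1}~and~\ref{th:access_2}. You simply spell out the transposition bookkeeping (swapping non-top and non-bottom linked SCCs, preserving the cover relation and costs) more explicitly than the paper does, which is a welcome but not substantively different elaboration.
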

 Now we will find a relation between minimum cost disjoint cycle condition and a bipartite matching problem. 
\subsection{Solving Minimum Cost Disjoint Cycle Problem}
In this subsection we establish a relation between disjoint cycle condition and perfect matching problem. Specifically, we show that when the inputs and outputs are constrained and each input and output are associated with costs, then satisfying disjoint cycle condition using a minimum cost input-output set is equivalent to solving a minimum cost perfect matching problem on a bipartite graph defined on the structural system. 

A structural system ($\bA, \bB, \bC$) with feedback matrix $\bK$ and cost vectors $p_u, p_y$ is said to satisfy the minimum cost disjoint cycle condition if all state vertices are spanned by disjoint union of cycles in the system digraph by using the least possible cost input-output set. That is, we need to find an input set $\I^\*_\C \subseteq \{1,\ldots,m\}$ and an output set $\J^\*_\C \subseteq \{1,\ldots,p\}$ such that all $x_i$'s are spanned by disjoint cycles in $\D(\bA, \bB_{\I^\*_\C}, \bC_{\J^\*_\C}, \bK_{(\I^\*_\C \times \J^\*_\C)})$ and $p(\I^\*_\C) + p(\J^\*_\C) \leqslant p(\I) + p(\J)$ for any $\I \subseteq \{1,\ldots,m \}$ and $\J \subseteq \{1,\ldots,p \}$ that satisfy disjoint cycle condition in $\D(\bA, \bB_{\I}, \bC_{\J}, \bK_{(\I \times \J)})$. Specifically, we need to solve the following optimization problem.

\begin{prob}\label{prob:cycle}
Given $(\bA, \bB, \bC, \bK)$ and cost vectors $p_u$ and $p_y$, find
\[
(\I^\*_\C, \J^\*_\C) ~\in~ \arg\min_{\10n \substack{\I_\C \subseteq \{1,\ldots,m\} \\ \J_\C \subseteq \{1,\ldots,p\}}} p(\I_\C, \J_\C),
\]
such that all $x_i$'s lie in finite disjoint union of cycles in $\D(\bA, \bB_{\I_\C}, \bC_{\J_\C}, \bK_{(\I_\C \times \J_\C)})$.
\end{prob}

We refer to Problem~\ref{prob:cycle} as {\it the minimum cost disjoint cycle problem}. Now we reduce the minimum cost disjoint cycle problem to a minimum cost perfect matching problem.  

\begin{algorithm}[t]
  \caption{Pseudo-code for reducing minimum cost disjoint cycle problem to a minimum cost perfect matching problem
  \label{alg:cycle}}
  \begin{algorithmic}
\State \textit {\bf Input:} Structural system $(\bA, \bB, \bC, \bK)$ and cost vectors $p_u, p_y$ 
\State \textit{\bf Output:} Input-output set $(\I(M_\C), \J(M_\C))$ and cost $p(\I(M_\C), \J(M_\C))$
\end{algorithmic}
  \begin{algorithmic}[1]
  \State Construct the bipartite graph $\B(\bA, \bB, \bC, \bK)$
  \State For $e \in \E_X \cup \E_U \cup \E_Y \cup \E_K \cup \E_{\mathbb{U}} \cup \E_{\mathbb{Y}}$ define:
  \State Cost, $ c(e) \leftarrow 
\begin{cases}
p_u(i) + p_y(j), {~\rm for~} e = (u'_i,y_j) \in \E_K,\\
0, ~~   {\rm otherwise}.
\end{cases}$ \label{step:bip_cost}
   \State  Find minimum cost perfect matching of $\B(\bA, \bB, \bC, \bK)$ under cost $c$, say $M_\C$
\State  Cost of $M_\C$, $c(M_\C) \leftarrow \sum_{e \in M_\C}c(e)$\label{step:cost_match}
\State Input index set selected under $M_\C$, $\I(M_\C) \leftarrow \{i:(x'_j,u_i) \in M_\C\}$ \label{step:ip_match}
\State Input cost $p(\I(M_\C)) \leftarrow \sum_{i \in \I(M_\C)}p_u(i)$\label{step:ip_cost_match}
\State Output index set selected under $M_\C$, $\J(M_\C) \leftarrow \{j:(y'_j,x_i) \in M_\C\}$ \label{step:op_match}
\State Output cost $p(\J(M_\C)) \leftarrow  \sum_{j \in \J(M_\C)}p_y(j)$\label{step:op_cost_match}.
\end{algorithmic}
\end{algorithm}

Pseudo-code for reducing the minimum cost disjoint cycle problem to a minimum cost perfect matching problem is presented in Algorithm~\ref{alg:cycle}. The bipartite graph $\B(\bA, \bB, \bC, \bK)$ constructed in \cite{PeqKarPap:15} for a special case is used here to guarantee condition~b) in Proposition~\ref{prop:SFM1} for a general case. Given the bipartite graph $\B(\bA, \bB, \bC, \bK)$ and the cost function $c$ defined as in Step~\ref{step:bip_cost}, we find a perfect matching $M_\C$. On obtaining a perfect matching $M_\C$, we define the associated cost $c(M_\C)$ as the sum of the costs of edges that are present in $M_\C$ (see Step~\ref{step:cost_match}). The input index set selected under $M_\C$ defined as $\I(M_\C)$ is the set of indices of $u_i$'s that are connected to some state vertices in $M_\C$ (see Step~\ref{step:ip_match}) and its cost is defined as $p(\I(M_\C))$ (see Step~\ref{step:ip_cost_match}). Now, the output index set selected under $M_\C$ defined as $\J(M_\C)$ consists of indices of all outputs $y_j$'s that are connected to some state vertices in $M_\C$ (see Step~\ref{step:op_match}) and its cost is defined as $p(\J(M_\C))$ (see Step~\ref{step:op_cost_match}).

We denote an optimal solution to the minimum cost perfect matching problem as $M^\*$ and the optimal cost as $c^\*$. Also, an optimal  solution to Problem~\ref{alg:cycle} is denoted as $(\I^\*_\C, \J^\*_\C)$ and the optimal input-output cost is denoted as $(p^\*_{\C u} + p^\*_{\C y})$. We prove the following theorem to give a necessary and sufficient condition for condition~b) in Proposition~\ref{prop:SFM1} for the sake of completeness.

\begin{thm}\label{th:disj}
Consider a structural system $(\bA, \bB, \bC)$ with feedback matrix $\bK$. Then, the bipartite graph $\B(\bA, \bB, \bC, \bK)$ has a perfect matching if and only if all states are spanned by disjoint union of cycles in $\D(\bA, \bB, \bC, \bK)$.
\end{thm}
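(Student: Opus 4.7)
I would prove the equivalence by exhibiting a direct correspondence between perfect matchings of $\B(\bA,\bB,\bC,\bK)$ and decompositions of the vertices of $\D(\bA,\bB,\bC,\bK)$ into a disjoint union of cycles, based on the elementary ``functional graph'' trick: a perfect matching on a vertex set with a canonical primed/unprimed copy structure is equivalent to an in-/out-degree one spanning subdigraph on the unprimed vertices.

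\textbf{Forward direction ($\Rightarrow$).} Suppose $M$ is a perfect matching of $\B(\bA,\bB,\bC,\bK)$. For each matched edge $(v',w)\in M$, where $v'\in V_{X'}\cup V_{U'}\cup V_{Y'}$ has canonical unprimed copy $v\in V_X\cup V_U\cup V_Y$, I would add a directed edge $w\to v$ to an auxiliary digraph $G$ on $V_X\cup V_U\cup V_Y$. By the definition of the edge sets $\E_X,\E_U,\E_Y,\E_K$ every such matched edge corresponds (upon reversing) to a directed edge of $\D(\bA,\bB,\bC,\bK)$, except for the auxiliary edges in $\E_{\mathbb U}$ and $\E_{\mathbb Y}$, which produce self-loops at the corresponding $u_i$ or $y_j$. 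Because $M$ is perfect, each $v$ receives exactly one incoming edge (from its primed copy $v'$) and each $w$ contributes exactly one outgoing edge, so $G$ is a spanning subdigraph with in-degree and out-degree one at every vertex; therefore $G$ decomposes into vertex-disjoint cycles covering $V_X\cup V_U\cup V_Y$. The crucial observation is that no auxiliary self-loop can sit at any $x_i$ (the bipartite graph has no $(x'_i,x_i)$ edge from $\E_{\mathbb U}\cup\E_{\mathbb Y}$, and a self-loop at $x_i$ coming from $\E_X$ is already a genuine edge of $\D$). Hence every cycle of $G$ passing through a state vertex is built entirely from genuine edges of $\D(\bA,\bB,\bC,\bK)$, which gives the required disjoint union of cycles spanning $V_X$.

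\textbf{Reverse direction ($\Leftarrow$).} Conversely, assume cycles $C_1,\ldots,C_g$ in $\D(\bA,\bB,\bC,\bK)$ are vertex-disjoint and their union contains $V_X$. I would build $M$ as follows: for every directed edge $w\to v$ appearing in some $C_i$, include the corresponding undirected bipartite edge $(v',w)$ in $M$; for every $u_i$ (respectively $y_j$) that lies on no $C_i$, include the auxiliary self-loop $(u'_i,u_i)$ (respectively $(y'_j,y_j)$). Vertex-disjointness of the cycles ensures that each $v'$ and each $w$ is touched at most once through the cycle edges, and the auxiliary self-loops are added only at vertices that were not yet matched, so $M$ is a valid matching. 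A counting check finishes: every $x_i\in V_X$ is on some $C_i$ and contributes exactly one in-edge and one out-edge to $M$, matching both $x'_i$ and $x_i$; every $u_i$ or $y_j$ is matched either through a cycle or through its auxiliary self-loop; hence $M$ saturates every vertex on both sides and is perfect.

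\textbf{Main obstacle.} The conceptually delicate point is in the forward direction: one must certify that cycles of $G$ containing state vertices cannot accidentally involve an auxiliary self-loop at some $u_i$ or $y_j$, since such a self-loop is not an edge of $\D$. This is handled by the in-degree/out-degree one structure of $G$: a vertex that is on a non-trivial cycle cannot simultaneously be a fixed point of the matching, so the auxiliary edges $\E_{\mathbb U}\cup\E_{\mathbb Y}$ only manifest as isolated length-one components of $G$ involving $u$- or $y$-vertices, cleanly separated from the component of every state vertex. This observation turns the bipartite-matching condition into precisely the disjoint-cycle condition required by Proposition~\ref{prop:SFM1}(b).
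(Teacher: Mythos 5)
Your proof is correct and takes essentially the same route as the paper's: both directions rest on the correspondence between a perfect matching of $\B(\bA,\bB,\bC,\bK)$ and an in-/out-degree-one spanning subdigraph of $\D(\bA,\bB,\bC,\bK)$, with the auxiliary edges $(u'_i,u_i)$ and $(y'_j,y_j)$ absorbing the inputs and outputs not used by the cycles. Your explicit check that these auxiliary self-loops form isolated components that cannot lie on any cycle through a state vertex is a point the paper treats more briefly, but it is the same underlying argument.
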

\begin{proof}
\noindent{\bf Only-if part:} We assume that the bipartite graph $\B(\bA, \bB, \bC, \bK)$ has a perfect matching and prove that all state nodes are spanned by disjoint union of cycles in $\D(\bA, \bB, \bC, \bK)$. Let $M$ be a perfect matching in $\B(\bA, \bB, \bC, \bK)$. Let $\E' = \{(u'_i, u_i), (y'_j, y_j)\}\in M$ for $i \in \{1,\ldots,m\}$ and $j \in \{1,\ldots,p\}$.  Thus edges in $M \setminus \E'$ correspond to edges in $\D(\bA, \bB, \bC, \bK)$ such that there exist one incoming edge and one outgoing edge corresponding to every vertex in $\D(\bA, \bB, \bC, \bK)$ except nodes $u_i$'s and $y_j$'s that has edges in $\E'$. Since corresponding to edges in $M \setminus \E'$ every vertex has both in-degree and out-degree one, these edges corresponds to disjoint cycles in $\D(\bA, \bB, \bC, \bK)$. Note that all state vertices lie in $M \setminus \E'$. Hence, all $x_i$'s are spanned by disjoint union of cycles. This completes the proof of only-if part.

\noindent{\bf If part:} We assume that there exist disjoint union of cycles that span all state nodes in $\D(\bA, \bB, \bC, \bK)$ and prove that there exists a perfect matching in $\B(\bA, \bB, \bC, \bK)$. Since the cycles are disjoint, each node in it has one incoming edge and one outgoing edge. Each edge in the cycle corresponds to an edge in the bipartite graph. Vertices in $\D(\bA, \bB, \bC, \bK)$ that are not covered by these cycles will belong to the set of input and output nodes only. For such nodes there exist edges $(u'_i, u_i)$ for all $i \in \{1,\ldots,m\}$ and $(y'_j, y_j)$ for all $j \in \{1,\ldots,p\}$ in $\B(\bA, \bB, \bC, \bK)$. These edges along with the cycle edges results in a perfect matching. This completes the proof.
\end{proof}

\begin{lem}\label{lem:disj_soln} 
Let $M_\C$ be a perfect matching in $\B(\bA, \bB, \bC, \bK)$ and $\I(M_\C), \J(M_\C)$ denote the index set of inputs and index set of outputs selected under $M_\C$ respectively. Then, all $x_i$'s lie in disjoint cycles in $\D(\bA, \bB_{\I(M_\C)}, \bC_{\J(M_\C)}, \bK_{(\I(M_\C) \times \J(M_\C))})$ and $p(\I(M_\C))+ p(\J(M_\C))= p(\I(M_\C), \J(M_\C)) = c(M_\C)$.
\end{lem}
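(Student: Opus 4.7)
The plan has two parts corresponding to the two assertions: (i)~the disjoint cycles cover all state nodes in the digraph restricted to the chosen inputs and outputs, and (ii)~$c(M_\C)$ equals the total cost of the selected inputs and outputs.

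For (i), the starting point is Theorem~\ref{th:disj} applied to $(\bA,\bB,\bC,\bK)$: since $M_\C$ is a perfect matching in $\B(\bA,\bB,\bC,\bK)$, all state vertices are spanned by a disjoint union of cycles in $\D(\bA,\bB,\bC,\bK)$. What remains is to check that none of these cycles uses an input $u_i$ with $i\notin\I(M_\C)$ or an output $y_j$ with $j\notin\J(M_\C)$. I would do this by tracing the bipartite-to-digraph edge correspondence at the input/output vertices of each cycle: if a cycle contains $u_i$, then it must traverse both an outgoing edge $u_i\to x_j$ (corresponding to $(x'_j,u_i)\in M_\C\cap\E_U$, so $i\in\I(M_\C)$ by Step~\ref{step:ip_match}) and an incoming edge $y_{j'}\to u_i$ (corresponding to $(u'_i,y_{j'})\in M_\C\cap\E_K$); a symmetric argument then forces $j'\in\J(M_\C)$. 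Since restriction to $(\I(M_\C),\J(M_\C))$ only drops unselected rows/columns of $\bB$, $\bC$, $\bK$, every cycle edge survives in $\D(\bA,\bB_{\I(M_\C)},\bC_{\J(M_\C)},\bK_{(\I(M_\C)\times\J(M_\C))})$.

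For (ii), by Step~\ref{step:bip_cost} only $\E_K$ edges carry nonzero cost, and each such edge $(u'_i,y_j)\in M_\C$ contributes $p_u(i)+p_y(j)$. The crucial combinatorial observation, following directly from the matching property together with the auxiliary edges $\E_\mathbb{U}$ and $\E_\mathbb{Y}$, is a pair of bijections: $u'_i$ is matched via $\E_K$ in $M_\C$ iff it is not matched via $\E_\mathbb{U}$ iff $u_i$ is matched via some $\E_U$ edge iff $i\in\I(M_\C)$; symmetrically, $y_j$ is matched via $\E_K$ iff $j\in\J(M_\C)$. Consequently the sum splits as
\[
c(M_\C)=\sum_{(u'_i,y_j)\in M_\C\cap\E_K}\bigl(p_u(i)+p_y(j)\bigr)=\sum_{i\in\I(M_\C)}p_u(i)+\sum_{j\in\J(M_\C)}p_y(j),
\]
which equals $p(\I(M_\C))+p(\J(M_\C))=p(\I(M_\C),\J(M_\C))$ by Steps~\ref{step:ip_cost_match}~and~\ref{step:op_cost_match} and the definition of $p(\cdot,\cdot)$.

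The main obstacle I anticipate is the bookkeeping in (i): verifying that the matching-induced cycles really stay inside the restricted system digraph. This is where one must carefully exploit the specific construction of $\B(\bA,\bB,\bC,\bK)$, particularly the way the auxiliary edges in $\E_\mathbb{U}$ and $\E_\mathbb{Y}$ absorb unused inputs and outputs, in order to conclude that every input and output touched by a cycle is indeed in $\I(M_\C)$ or $\J(M_\C)$ and that the required feedback edge is retained by the restriction of $\bK$. The cost identity in (ii) is then essentially a double-counting argument over $M_\C\cap\E_K$ using the same bijections.
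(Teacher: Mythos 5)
Your proposal is correct and follows the same route as the paper: invoke Theorem~\ref{th:disj} to get the disjoint cycles, then read off the cost identity from Steps~\ref{step:bip_cost} and \ref{step:ip_match}--\ref{step:op_cost_match} of Algorithm~\ref{alg:cycle}. In fact you supply exactly the details the paper's two-sentence proof leaves implicit --- the verification that the matching-induced cycles use only inputs in $\I(M_\C)$, outputs in $\J(M_\C)$, and feedback edges in $\I(M_\C)\times\J(M_\C)$ (so they survive the restriction), and the bijection showing $M_\C\cap\E_K$ pairs up precisely the selected inputs and outputs so that the cost sum splits --- and both of these arguments are sound.
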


\begin{proof}
Given $M_\C$ is a perfect matching in the bipartite graph $\B(\bA, \bB, \bC, \bK)$ with cost function $c$. Using Theorem~\ref{th:disj}, there exist disjoint cycles that cover all state nodes in $\D(\bA, \bB_{\I(M_\C)}, \bC_{\J(M_\C)}, \bK_{(\I(M_\C) \times \J(M_\C))})$. Now, Step~\ref{step:bip_cost} and Steps~\ref{step:ip_match}~to~\ref{step:op_cost_match} in Algorithm~\ref{alg:cycle} gives $p(\I(M_\C)) + p(\J(M_\C)) = c(M_\C)$.\qed
\end{proof}

Now we prove that minimum cost perfect matching problem on $\B(\bA, \bB, \bC, \bK)$ with cost $c$  can be used to solve the minimum cost disjoint cycle problem.

\begin{thm}\label{th:mincost_matching}
Consider a structural system $(\bA, \bB, \bC)$ with feedback matrix $\bK$ and cost vectors $p_u, p_y$. Let $(\I^\*_\C, \J_\C^\*)$ be an optimal solution  to Problem~\ref{prob:cycle} and $p(\I^\*_\C, \J_\C^\*)$ be the optimal cost of Problem~\ref{prob:cycle}. Let $c^\*$ is the optimal cost of the minimum cost perfect matching problem on $\B(\bA, \bB, \bC, \bK)$. Then, $c^\* = p(\I^\*_\C, \J^\*_\C)$. Moreover, the input index set and output index set selected under Algorithm~\ref{alg:cycle} provide an optimal solution to Problem~\ref{prob:cycle}.
\end{thm}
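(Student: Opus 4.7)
The plan is to prove $c^\* = p(\I_\C^\*, \J_\C^\*)$ by two opposite inequalities and then read off optimality of the algorithm's output as an immediate consequence. Both inequalities rest on Theorem~\ref{th:disj} and Lemma~\ref{lem:disj_soln}: the former translates between disjoint cycle covers and perfect matchings, while the latter certifies that a perfect matching in $\B(\bA, \bB, \bC, \bK)$ yields a feasible solution to Problem~\ref{prob:cycle} with equal cost.

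For the direction $c^\* \leqslant p(\I_\C^\*, \J_\C^\*)$, start with an optimal pair $(\I_\C^\*, \J_\C^\*)$. Theorem~\ref{th:disj}, applied to the reduced system $(\bA, \bB_{\I_\C^\*}, \bC_{\J_\C^\*}, \bK_{(\I_\C^\* \times \J_\C^\*)})$, supplies a perfect matching $M'$ in the associated bipartite graph. I extend $M'$ to a matching $\widehat{M}$ of the full $\B(\bA, \bB, \bC, \bK)$ by appending the zero-cost self-loop edges $(u'_i, u_i) \in \E_{\mathbb{U}}$ for $i \notin \I_\C^\*$ and $(y'_j, y_j) \in \E_{\mathbb{Y}}$ for $j \notin \J_\C^\*$; this is a perfect matching because the added edges involve exactly the vertices left unmatched by $M'$. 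To evaluate $c(\widehat{M})$, invoke the optimality of $(\I_\C^\*, \J_\C^\*)$: every $i \in \I_\C^\*$ (respectively, $j \in \J_\C^\*$) must be traversed by some cycle in the disjoint cycle cover, for otherwise it could be deleted from $\I_\C^\*$ (respectively, $\J_\C^\*$) strictly lowering the cost. Hence in $M'$ each $u_i$ with $i \in \I_\C^\*$ is matched to some $x'_k$, forcing $u'_i$ to be matched through an $\E_K$ edge, and similarly each $y_j$ with $j \in \J_\C^\*$ is matched through an $\E_K$ edge to some $u'_{i'}$. By the cost assignment in Step~\ref{step:bip_cost}, each such $\E_K$ edge contributes $p_u(i) + p_y(j)$, and summing over the $\E_K$ edges of $\widehat{M}$ yields $c(\widehat{M}) = \sum_{i \in \I_\C^\*} p_u(i) + \sum_{j \in \J_\C^\*} p_y(j) = p(\I_\C^\*, \J_\C^\*)$. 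Therefore $c^\* \leqslant c(\widehat{M}) = p(\I_\C^\*, \J_\C^\*)$.

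For the reverse direction, let $M^\*$ be an optimal minimum cost perfect matching of $\B(\bA, \bB, \bC, \bK)$, of cost $c^\*$. By Lemma~\ref{lem:disj_soln}, the pair $(\I(M^\*), \J(M^\*))$ is feasible for Problem~\ref{prob:cycle} and satisfies $p(\I(M^\*), \J(M^\*)) = c(M^\*) = c^\*$. Optimality of $(\I_\C^\*, \J_\C^\*)$ then gives $p(\I_\C^\*, \J_\C^\*) \leqslant p(\I(M^\*), \J(M^\*)) = c^\*$. The two inequalities combine to $c^\* = p(\I_\C^\*, \J_\C^\*)$. Since Algorithm~\ref{alg:cycle} outputs $(\I(M_\C), \J(M_\C))$ with $M_\C$ a minimum cost perfect matching, this output is feasible for Problem~\ref{prob:cycle} with cost $c^\* = p(\I_\C^\*, \J_\C^\*)$, and is therefore optimal.

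The main obstacle is the \textbf{WLOG optimality observation}: I must argue that in any optimal $(\I_\C^\*, \J_\C^\*)$ every chosen input and output actually lies on some cycle of the disjoint cycle cover, so that the extension $\widehat{M}$ is forced to use an $\E_K$ edge at each $u'_i$ with $i \in \I_\C^\*$ and at each $y_j$ with $j \in \J_\C^\*$. Without this step the cost of $\widehat{M}$ could only be bounded above by $p(\I_\C^\*, \J_\C^\*)$ with additional slack, and the clean equality of costs (and hence of optima) would not fall out. Once this is in place, the remainder of the argument is straightforward bookkeeping based on Step~\ref{step:bip_cost} and Lemma~\ref{lem:disj_soln}.
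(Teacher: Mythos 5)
Your proof is correct and follows essentially the same route as the paper's: both directions come from Theorem~\ref{th:disj} together with the extension of a perfect matching of the reduced bipartite graph by the zero-cost edges $(u'_i,u_i)$, $(y'_j,y_j)$, and from Lemma~\ref{lem:disj_soln} applied to $M^\*$; your derivation of the algorithm's optimality directly from the cost equality is a slightly cleaner packaging of the paper's closing contradiction argument. The one substantive difference is that your self-identified ``main obstacle'' is not actually needed: for the direction $c^\* \leqslant p(\I^\*_\C,\J^\*_\C)$ it suffices to have $c(\widehat{M}) \leqslant p(\I^\*_\C,\J^\*_\C)$, which follows immediately from Step~\ref{step:bip_cost} and the fact that each $u'_i$ and each $y_j$ occurs in at most one $\E_K$ edge of a matching --- the equality of the two optima then falls out of combining the two opposite inequalities, exactly as in the paper. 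Dropping that step also removes its only fragility, namely that ``deleting an unused input strictly lowers the cost'' fails when some $p_u(i)$ or $p_y(j)$ is zero.
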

\begin{proof}
Given $(\I^\*_\C, \J_\C^\*)$ is an optimal solution to Problem~\ref{prob:cycle}. Then, from Theorem~\ref{th:disj} there exists a perfect matching in $\B(\bA, \bB_{\I^\*_\C}, \bC_{\J^\*_\C}, \bK_{(\I^\*_\C \times \J^\*_\C)})$. Let $M$ be an optimum matching in $\B(\bA, \bB_{\I^\*_\C}, \bC_{\J^\*_\C}, \bK_{(\I^\*_\C \times \J^\*_\C)})$. Then, $c(M) \leqslant p(\I^\*_\C, \J_\C^\*)$. Note that $\widetilde{M} = M \cup \{(u'_i, u_i):i \notin \I^\*_\C\} \cup \{(y'_j, y_j):j \notin \J^\*_\C\}$ is an optimum matching in $\B(\bA, \bB, \bC, \bK)$. Also $c(\widetilde{M}) = c(M)$. Thus $c(\widetilde{M}) = c^\* \leqslant p(\I^\*_\C, \J_\C^\*)$.

Now let $M^\*$ is an optimal solution to the minimum cost perfect matching problem in $\B(\bA, \bB, \bC, \bK)$. Then $c(M^\*) = c^\*$. By Theorem~\ref{th:disj} there exists disjoint cycles whose union span all $x_i$'s in $\D(\bA, \bB_{\I(M^\*)}, \bC_{\J(M^\*)}, \bK_{(\I(M^\*) \times \J(M^\*))})$. Let the input-output set used in these cycles are $(\I, \J)$. Now $p(\I, \J) \leqslant c^\*$. Also, $ p(\I^\*_\C, \J_\C^\*) \leqslant p(\I, \J)$. Thus $p(\I^\*_\C, \J_\C^\*) \leqslant c^\*$. Combining both, we get $p(\I^\*_\C, \J_\C^\*) = c^\*$.  

Now we assume that $M^\*$ is an optimal solution to the minimum cost perfect matching problem with cost $c^\*$ and then show that input-output set $(\I(M^\*), \J(M^\*))$ selected under $M^\*$ is an optimal solution to Problem~\ref{prob:cycle}, i.e., all $x_i$'s lie in disjoint union of cycles in $\D(\bA, \bB_{\I(M^\*)}, \bC_{\J(M^\*)}, \bK_{(\I(M^\*) \times \J(M^\*))})$ and $p(\I(M^\*), \J(M^\*))$ $= p(\I^\*_\C, \J_\C^\*)$.

Since $M^\*$ is a solution to the minimum cost perfect matching problem, by Lemma~\ref{lem:disj_soln} there are disjoint cycles in $\D(\bA, \bB_{\I(M^\*)}, \bC_{\J(M^\*)}, \bK_{(\I(M^\*), \J(M^\*))})$ such that all state nodes lie in their union. Thus $(\I(M^\*), \J(M^\*))$ is a feasible solution to Problem~\ref{prob:cycle}. To prove minimality we use a contradiction argument. Let us assume that $M^\*$ is an optimal matching but $(\I(M^\*), \J(M^\*))$ is not an optimal solution to Problem~\ref{prob:cycle}. Then there exists $\I'_\C \subset \{1,\ldots,m\}$ and $\J'_\C \subset \{1,\ldots,p\}$ that satisfy the disjoint cycle condition in $\D(\bA, \bB_{\I'_\C}, \bC_{\J'_\C}, \bK_{(\I'_\C \times \J'_\C)})$ and $p(\I'_\C,\J'_\C) < p(\I(M^\*), \J(M^\*))$. Then by Theorem~\ref{th:disj} there exists a perfect matching $M'$ such that $\I(M') = \I'_\C$ and $\J(M') = \J'_\C$. Using Lemma~\ref{lem:disj_soln}, \mbox{$c(M') < c^\*$}. This gives a contradiction to the assumption that $M^\*$ is an optimal matching. This completes the proof. 
\end{proof}

Hence, an optimal solution $M^\*$ to the minimum cost perfect matching problem gives a minimum cost input-output set $(\I^\*_\C, \J^\*_\C)$ that satisfies the disjoint cycle condition. There exist efficient polynomial time algorithms to solve the minimum cost perfect matching problem \cite{CorLeiRivSte:01}. Thus using these algorithms we can solve Problem~\ref{prob:cycle} optimally in polynomial time.  In the next section we give an approximation algorithm to solve Problem~\ref{prob:one}.
\section{Approximating Constrained Input-Output Selection for Generic Arbitrary Pole Placement}\label{sec:main}
In this section we give a polynomial time approximation algorithm for solving Problem~\ref{prob:one}. We propose a three stage algorithm for solving Problem~\ref{prob:one}. The pseudo-code for the proposed algorithm is given in Algorithm~\ref{alg:threestage}. In the first stage of Algorithm~\ref{alg:threestage} we solve a weighted set cover problem defined on the structural system $(\bA, \bB, p_u)$ using a greedy approximation algorithm given in \cite{Chv:79} to obtain an approximate solution to the minimum cost accessibility problem. We define the input index set selected under its solution as $\hat{\I}^\*_\A$ (see Step~\ref{step:soln1}). Subsequently, in stage two we solve a weighted set cover problem  defined on the structural system $(\bA, \bC, p_y)$ to approximate the minimum cost sensability problem. We define the output index set selected under its solution as $\hat{\J}^\*_\A$ (see Step~\ref{step:soln2}). In the third stage of the algorithm a minimum cost perfect matching problem is solved on $\B(\bA, \bB, \bC, \bK)$ with cost function $c$. We define the input-output index set selected under solution to this problem as $(\I^\*_\C, \J^\*_\C)$ (see Step~\ref{step:soln3}). In one of our main result we prove that $(\hat{\I}^\*_\A \cup \I^\*_\C, \hat{\J}^\*_\A \cup \J^\*_\C)$ is an approximate solution to Problem~\ref{prob:one}. Firstly, we prove the following preliminary result.
\begin{algorithm}[t]
  \caption{Pseudo-code for solving minimum cost accessibility, sensability and disjoint cycle problems
  \label{alg:threestage}}
  \begin{algorithmic}
\State \textit {\bf Input:} Structural system $(\bA, \bB, \bC)$, feedback matrix $\bK$, input cost vector $p_u$, output cost vector $p_y$
\State \textit{\bf Output:} Input set $\I_a$ and output set $\J_a$
\end{algorithmic}
  \begin{algorithmic}[1]
  \State Find approximate solution to the minimum cost accessibility problem on $(\bA, \bB, p_u)$, say $\hat{\I}^\*_\A$ \label{step:soln1}  
  \State Find approximate solution to the minimum cost sensability problem on $(\bA, \bC, p_y)$, say $\hat{\J}^\*_\A$\label{step:soln2}
\State Find optimal solution to the minimum cost disjoint cycle problem on $\B(\bA, \bB, \bC, \bK)$ under cost function $c$, say $(\I^\*_\C, \J^\*_\C)$ \label{step:soln3}
\State $\I_a \leftarrow \hat{\I}^\*_\A \cup \I^\*_\C$ \label{step:soln5}
\State $\J_a \leftarrow \hat{\J}^\*_\A \cup \J^\*_\C$ \label{step:soln6}
\end{algorithmic}
\end{algorithm}

\begin{lem}\label{lem:SCC}
Let $\D(\bA)$ denote the state digraph of a structural system. Then, either one of the following happens:\\
\noindent $\bullet$ an SCC in $\D(\bA)$ is both non-top linked and non-bottom linked, \\
\noindent $\bullet$ an SCC in $\D(\bA)$ lies in a path starting at some non-top linked SCC and ending at some non-bottom linked SCC.
\end{lem}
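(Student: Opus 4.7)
The plan is to work in the DAG obtained by condensing each SCC of $\D(\bA)$ into a supernode, which is exactly the DAG introduced just before Definition~\ref{def:scc}. In this DAG, by definition, the non-top linked SCCs are precisely the vertices with in-degree zero (the sources) and the non-bottom linked SCCs are the vertices with out-degree zero (the sinks). So the claim reduces to the standard DAG fact: every vertex of a finite DAG is either simultaneously a source and a sink, or lies on a directed path from some source to some sink.

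First I would fix an arbitrary SCC $\S$ of $\D(\bA)$ and split into cases according to whether $\S$, viewed as a node of the condensation DAG, has any incident edges. If $\S$ has neither incoming nor outgoing edges, it is by Definition~\ref{def:scc} both non-top linked and non-bottom linked, placing it in the first alternative. Otherwise, I would construct the desired path in two halves. For the backward half, start at $\S$ and repeatedly traverse an incoming edge in the DAG; since the DAG is finite and acyclic, no supernode can be revisited, so this process terminates after finitely many steps at some supernode with no incoming edge, which is a non-top linked SCC $\cN_i$. For the forward half, symmetrically follow outgoing edges from $\S$ until reaching a supernode with no outgoing edge, which is a non-bottom linked SCC $\cNn_j$. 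Concatenating the reverse of the backward walk, the vertex $\S$, and the forward walk yields a directed path in the DAG from $\cN_i$ through $\S$ to $\cNn_j$, which in $\D(\bA)$ corresponds to a chain of SCCs satisfying the second alternative.

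A small subtlety to handle cleanly is the case where $\S$ itself has, say, no incoming edges but some outgoing edges (so $\S$ is itself non-top linked but not non-bottom linked), and the symmetric case. Here the backward walk has length zero and the path is simply $\S = \cN_i \to \cdots \to \cNn_j$; the construction still produces a valid path whose endpoints are a non-top linked and a non-bottom linked SCC, so the second alternative again holds. I do not expect any genuine difficulty in this proof; the only thing to be careful about is asserting that the iterative walks must terminate, which is immediate from finiteness and acyclicity of the condensation DAG, so the main (minor) obstacle is just presenting the case split and the termination argument compactly.
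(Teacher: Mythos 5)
Your proposal is correct and follows essentially the same route as the paper: both work in the condensation DAG, treat the isolated-node case as the first alternative, and otherwise invoke acyclicity to obtain a source-to-sink path through the given SCC. The only difference is that you spell out the backward/forward walk and its termination explicitly, whereas the paper simply asserts the existence of such a path from acyclicity.
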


\begin{proof}
Consider the Directed Acyclic Graph (DAG) whose vertices are the SCCs in $\D(\bA)$ and an edge exists between two nodes if and only if there exists an edge connecting two states in those respective SCCs in $\D(\bA)$. The nodes in the DAG are of two types: (i)~isolated, and (ii)~has an incoming and/or outgoing edge. In case~(i) the corresponding SCC is both non-top linked and non-bottom linked. In case~(ii) it has either an incoming edge or an outgoing edge or both. Thus those SCCs lie in some path from some non-top linked SCC to some non-bottom linked SCC since the DAG is acyclic. This completes the proof.  
\end{proof}
Now we prove our main result.

\noindent{\it Proof~of~Theorem~\ref{th:main}}:
Given $(\I_a, \J_a)$ is an output of Algorithm~\ref{alg:threestage}. Hence, all states are accessible in $\D(\bA, \bB_{\I_a}, \bC, \bK)$ and states are sensable in $\D(\bA, \bB, \bC_{\J_a}, \bK)$. Thus, in $\D(\bA, \bB_{\I_a}, \bC_{\J_a}, \bK_{(\I_a \times \J_a)})$ all states are both accessible and sensable. Consider an arbitrary state $x$ which belongs to some SCC $\mathcal{N}$. By Lemma~\ref{lem:SCC}, $\mathcal{N}$ lies on some path from a non-top linked SCC, say $\cN$, to a non-bottom linked SCC, say $\cNn$, in the SCC DAG. Since $U = \{u_i:i \in \I_a \}$ are enough for accessibility, there exists $u \in U$ such that $u$ covers $\cN$. Similarly, since $Y = \{y_j: j \in \J_a \}$ are enough for sensability there exists $y \in Y$ such that $y$ covers $\cNn$. Since $\bK$ is complete $(y,u)$ belong to $\D(\bA, \bB_{\I_a}, \bC_{\J_a}, \bK_{(\I_a \times \J_a)})$. Thus in this digraph all states in all the SCCs of $\D(\bA)$ that lie in the path from $\cN$ to $\cNn$ now belong to a single SCC in $\D(\bA, \bB_{\I_a}, \bC_{\J_a}, \bK_{(\I_a \times \J_a)})$ which has edge $(y,u)$. Thus $x$ belongs to an SCC in $\D(\bA, \bB_{\I_a}, \bC_{\J_a}, \bK_{(\I_a \times \J_a)})$ with a $(y,u)$ edge. Since $x$ is arbitrary condition~a) in Proposition~\ref{prop:SFM1} follows.
Since $(\I_a, \J_a)$ is an output of Algorithm~\ref{alg:threestage}, by Theorem~\ref{th:disj} there exists disjoint cycles that cover all state nodes using inputs whose indices are in $\I_a$ and outputs whose indices are in $\J_a$. Thus $(\I_a, \J_a)$ satisfies condition~b) in Proposition~\ref{prop:SFM1}. Thus $(\I_a, \J_a) \in \K$. This completes the proof of~i).

Let $\I^\*_\A$ and $\J^\*_\A$ are optimal solutions to the minimum cost accessibility problem and minimum cost sensability problem respectively. Given $(\I_a, \J_a)$ is an output of Algorithm~\ref{alg:threestage}. Let $\I_a = \hat{\I}^\*_\A \cup \I^\*_\C$, where $\hat{\I}^\*_\A$ is an $\epsilon_1$-optimal solution to the minimum cost accessibility problem and $\I^\*_\C$ is a minimum cost set that satisfy the disjoint cycle condition. Similarly, $\J_a = \hat{\J}^\*_\A \cup \J^\*_\C$, where $\hat{\J}^\*_\A$ is an $\epsilon_2$-optimal solution to the minimum cost sensability problem and $\J^\*_\C$ is a minimum cost set that satisfy the disjoint cycle condition.  Now by Theorem~\ref{th:access_1}, $\epsilon_1 \leqslant {\rm log}\,\mu_{\rm max}$ and by Corollary~\ref{cor:setcover2}, $\epsilon_2 \leqslant {\rm log}\,\eta_{\rm max}$. Since $(\I^\*, \J^\*)$ is an optimal solution to Problem~\ref{prob:one} its cost is atleast the cost of satisfying the two conditions in Proposition~\ref{prop:SFM1} separately. This give Equations~\eqref{eq:one}~and~\eqref{eq:three}.
\begin{eqnarray}
p(\I^\*,\J^\*) & \geqslant & p(\I^\*_\A) +  p(\J^\*_\A), \label{eq:one}\\
p(\I^\*,\J^\*) & \geqslant & p(\I^\*_\C,\J^\*_\C), \label{eq:three}\\
2p(\I^\*,\J^\*) & \geqslant & p(\I^\*_\A) + p(\J^\*_\A)+p(\I^\*_\C) + p(\J^\*_\C), \nonumber \\
p(\hat{\I}^\*_\A) + p(\hat{\J}^\*_\A) & \leqslant & {\rm{log\,}}n\,(p(\I^\*_\A)+p(\J^\*_\A)), \label{eq:four}\\
p(\I^\*,\J^\*) &\geqslant & \dfrac{ p(\hat{\I}^\*_\A) +  p(\hat{\J}^\*_\A)}{2\,({\rm{log\,}}n\,)} + \dfrac{p(\I^\*_\C,\J^\*_\C)}{2},\nonumber \\
&\geqslant & \dfrac{p(\hat{\I}^\*_\A,\hat{\J}^\*_\A) + p(\I^\*_\C,\J^\*_\C)}{2\,({\rm{log\,}}n\,)}, \\ \label{eq:six}
& = &  \dfrac{p(\I_A,\J_A)}{2\,({\rm{log\,}}n\,)}. \nonumber
\end{eqnarray}
Equation~\eqref{eq:four} holds as $\hat{\I}^\*_\A$ and $\hat{\J}^\*_\A$ are approximate solutions to the minimum cost accessibility problem and the minimum cost sensability problem respectively, obtained using greedy
approximation of their weighted set cover formulations. Equation~\eqref{eq:six} holds as $2{\rm{log\,}}n\,) \geqslant 1$. This proves~(ii). 

From Proposition~\ref{prop:inapprox} we know that the weighted set cover problem cannot be approximated to factor $(1-o(1))\,{\rm log\,}N$, where $N$ is the cardinality of the universe. Hence, there does not exist any polynomial algorithm that has approximation ratio  $(1-o(1))\,{\rm log\, (max}(q,k))$ for Problem~\ref{prob:one}. Note that ${\rm max}(q,k) \leqslant n$. Thus  there does not exist any polynomial algorithm that has approximation ratio $(1-o(1))\,{\rm log\,}n$ for solving Problem~\ref{prob:one}. Thus the proposed algorithm is order optimal approximation algorithm for Problem~\ref{prob:one}.\qed

 In the following theorem we give the complexity of the proposed approximation algorithm.
\begin{thm}\label{th:comp}
Algorithm~\ref{alg:threestage} which takes as input a structural system $(\bA, \bB, \bC)$ with complete feedback matrix $\bK$ and cost cost vectors $p_u, p_y$ and gives as output an approximate solution $(\I_a, \J_a)$ to Problem~\ref{prob:one} has complexity $O(n^3)$, where $n$ denotes the number of states in the system.
\end{thm}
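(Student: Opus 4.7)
The plan is to bound the runtime of Algorithm~\ref{alg:threestage} step by step and identify that the dominant cost comes from the minimum cost perfect matching subroutine of Step~\ref{step:soln3}, which runs in $O(n^3)$ time; every other operation will be strictly cheaper, so the overall bound follows by summation.

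First, I would handle Steps~\ref{step:soln1} and~\ref{step:soln2}. For Step~\ref{step:soln1}, the minimum cost accessibility problem on $(\bA, \bB, p_u)$ is reduced to a weighted set cover instance via Algorithm~\ref{alg:access1}; by Lemma~\ref{lem:comp_1} this reduction costs $O(n^2)$. The reduced instance has universe size $q \leqslant n$ (number of non-top linked SCCs) and at most $m \leqslant n$ sets, so the greedy weighted set cover algorithm of \cite{Chv:79} runs in $O(qm) = O(n^2)$ operations; extracting $\hat{\I}^\*_\A$ from the cover via Steps~\ref{step:IP1}--\ref{step:cost1} of Algorithm~\ref{alg:access1} is $O(n)$. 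Thus Step~\ref{step:soln1} takes $O(n^2)$. Step~\ref{step:soln2} is symmetric: applied to $(\bA^T, \bC^T, p_y)$ with $k \leqslant n$ non-bottom linked SCCs and $p \leqslant n$ outputs, the same analysis gives $O(n^2)$.

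Next I would bound Step~\ref{step:soln3}, which is the dominant contribution. Constructing the bipartite graph $\B(\bA, \bB, \bC, \bK)$ as described in Section~\ref{sec:graph} is at most $O(n^2)$, since the graph has $n + m + p = O(n)$ vertices on each side and at most $O(n^2)$ edges. Assigning the cost function $c$ in Step~\ref{step:bip_cost} of Algorithm~\ref{alg:cycle} is linear in the number of edges, hence $O(n^2)$. The main cost is the minimum cost perfect matching in this bipartite graph, for which polynomial time algorithms of complexity $O(\ell^3)$ are available, where $\ell$ is the number of vertices on one side of the bipartition (as noted in the paragraph preceding Section~\ref{sec:access}). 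Since $\ell = n + m + p = O(n)$, this step costs $O(n^3)$. Recovering $\I^\*_\C, \J^\*_\C$ via Steps~\ref{step:ip_match}--\ref{step:op_cost_match} is linear in the size of the matching, hence $O(n)$.

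Finally, Steps~\ref{step:soln5} and~\ref{step:soln6} perform two set unions on subsets of $\{1,\dots,m\}$ and $\{1,\dots,p\}$, each of size $O(n)$, so they cost $O(n)$. Summing the contributions, Algorithm~\ref{alg:threestage} runs in $O(n^2) + O(n^2) + O(n^3) + O(n) = O(n^3)$, which yields the claim. The only non-routine aspect is verifying that $m, p = O(n)$ is without loss of generality and that the cited bipartite matching complexity $O(\ell^3)$ indeed applies to our weighted bipartite graph; both follow from the setup in Section~\ref{sec:graph} and the standard results invoked from \cite{CorLeiRivSte:01}, so no further obstacles arise.
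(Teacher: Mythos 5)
Your proposal is correct and follows essentially the same route as the paper's own proof: bound the SCC/set-cover stages by $O(n^2)$, observe that the minimum cost bipartite matching dominates at $O(n^3)$, and sum. Your accounting is somewhat more detailed (e.g., the $O(qm)$ bound for the greedy step and the explicit reliance on $m,p=O(n)$, which the paper also implicitly assumes), but the argument is the same.
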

\begin{proof}
Given state digraph $\D(\bA) = \D(V_X, E_X)$ all the non-top linked SCCs can be found in $O({\rm max}(|V_X|,|E_X|))$ computations. Here $|V_X| = n$ and $|E_X|$ is atmost $|V_X|^2$. Thus set cover problems can be formulated in $O(n^2)$ computations. The greedy selection scheme for finding the approximate solution to the set cover problem has $O(n)$ complexity \cite{Chv:79}. The minimum cost bipartite matching can be solved in $O(n^3)$ computations. Thus Algorithm~\ref{alg:threestage} has $O(n^3)$ complexity. 
\end{proof}
In the next section we discuss few special class of systems in the context of Problem~\ref{prob:one}.
\section{Special Cases}\label{sec:cases}
In this section we consider few special cases. Using the  approximation algorithm given in Section~\ref{sec:main} we obtain the approximation results for these cases. In the following subsections we explain each of these cases briefly.

\subsection{Irreducible Systems}\label{sec:irr}
In this sub-section we consider systems whose digraph $\D(\bA)$ is irreducible, that is $\D(\bA)$ is a single SCC. Note that for this class of systems Problem~\ref{prob:one} is not NP-hard \cite{PeqKarPap:15}. Pequito.et al addressed Problem~\ref{prob:one} along with costs for feedback edges in \cite{PeqKarPap:15} and obtained a polynomial time optimal algorithm. In the following result we prove that the polynomial time algorithm given in this paper also gives an optimal solution to Problem~\ref{prob:one}.
\begin{thm}\label{th:irr}
Consider a structural system $(\bA, \bB, \bC)$, complete feedback matrix $\bK$ and cost vectors $p_u, p_y$. Let $\D(\bA)$ is irreducible. Then Algorithm~\ref{alg:threestage} returns an optimal solution to Problem~\ref{prob:one}.
\end{thm}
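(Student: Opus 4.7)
The plan is to observe that irreducibility trivializes the two set-cover sub-problems invoked by Algorithm~\ref{alg:threestage}, so all three sub-problems are solved exactly, and then to argue that the union step of the algorithm does not overcount. Since $\D(\bA)$ consists of a single strongly connected component, it is simultaneously the unique non-top linked SCC and the unique non-bottom linked SCC. Thus the weighted set cover instance produced by Algorithm~\ref{alg:access1} has a universe $\U$ of cardinality one, and the analogous instance for the sensability sub-problem likewise has universe of cardinality one. On such an instance, the greedy algorithm of \cite{Chv:79} simply selects the minimum weight set covering the single element, which is exactly optimal. Hence $p(\hat{\I}^\*_\A) = p^\*_\A$ and $p(\hat{\J}^\*_\A) = p^\*_\A$, i.e., $\epsilon_1 = \epsilon_2 = 1$ in the notation of the proof of Theorem~\ref{th:main}. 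Together with Theorem~\ref{th:mincost_matching}, which guarantees that Step~\ref{step:soln3} is always exact, all three sub-problems are solved to optimality.

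Feasibility of the algorithm's output $(\I_a, \J_a)$ for Problem~\ref{prob:one} is immediate from part~(i) of Theorem~\ref{th:main}. For optimality of its cost, the plan is to sharpen the two lower bounds used in the proof of Theorem~\ref{th:main}, namely $p^\* \geq p(\I^\*_\A) + p(\J^\*_\A) = p(\hat{\I}^\*_\A) + p(\hat{\J}^\*_\A)$ and $p^\* \geq p(\I^\*_\C, \J^\*_\C)$, into a single additive lower bound
\[
p^\* \;\geq\; p\bigl(\hat{\I}^\*_\A \cup \I^\*_\C\bigr) \;+\; p\bigl(\hat{\J}^\*_\A \cup \J^\*_\C\bigr),
\]
which matches the upper bound produced by Algorithm~\ref{alg:threestage} exactly and therefore yields optimality. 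Since $\D(\bA)$ is a single SCC, any feasible $(\I,\J)$ must include at least one input and at least one output purely to meet condition~a of Proposition~\ref{prop:SFM1}, in addition to the inputs and outputs needed for condition~b. An exchange argument on an optimal minimum-cost perfect matching of $\B(\bA, \bB, \bC, \bK)$ would show that, without loss of generality, the matching can be chosen so that its induced input/output index sets already contain the cheapest single input and the cheapest single output covering the SCC, giving the desired additive lower bound.

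The main obstacle is precisely this exchange step: showing that an optimal matching can be modified so that its induced input and output index sets contain $\hat{\I}^\*_\A$ and $\hat{\J}^\*_\A$ without increasing its cost. Irreducibility is essential here because the strong connectivity of $\D(\bA)$ gives a rich structure of admissible augmenting paths in $\B(\bA,\bB,\bC,\bK)$. I expect the argument to use the characterization of minimum-cost perfect matchings via alternating cycles of nonpositive total cost, exploiting the trivial self-matching edges $(u'_i, u_i)$ and $(y'_j, y_j)$ that are always present on the right-hand side: swapping an expensive matched input (output) for the cheapest one should preserve a perfect matching precisely because irreducibility prevents any state vertex from being structurally ``trapped'' and forced to use a particular input or output. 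Formalizing this swap, and verifying that after the swap the induced sets in Steps~\ref{step:soln5}-\ref{step:soln6} are covered by the matching itself, is the heart of the proof.
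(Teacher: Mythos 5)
Your opening observation (a single SCC makes both set-cover universes singletons, so the greedy stages are exact) is correct but is not where the difficulty lies, and the route you then take to optimality breaks down. The additive lower bound you aim for, $p^\* \geq p(\hat{\I}^\*_\A \cup \I^\*_\C) + p(\hat{\J}^\*_\A \cup \J^\*_\C)$, is false, because its justification --- that condition~a) of Proposition~\ref{prop:SFM1} needs an input and an output \emph{in addition to} those needed for condition~b) --- is exactly backwards in the irreducible case. Since $\D(\bA)$ is one SCC and $\bK$ is complete, \emph{any} single $(y_j,u_i)$ feedback edge already merges all states into one SCC containing an $E_K$ edge; hence whatever input--output pair the matching stage uses for condition~b) satisfies condition~a) for free, and $p^\* = p(\I^\*_\C,\J^\*_\C)$ whenever the optimal matching uses at least one input and one output (the degenerate case where $\B(\bA)$ itself has a perfect matching is handled by adding only the cheapest input and cheapest output). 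A concrete counterexample to your inequality: take three states with cycles $x_1\leftrightarrow x_2$ and $x_1\leftrightarrow x_3$, so $\B(\bA)$ has no perfect matching and any disjoint cycle cover must route one of $x_2,x_3$ through an input actuating it and an output sensing it; let the unique input and output touching $x_3$ cost $10$ each, and let a second input and output touching only $x_1$ cost $1$ each. Then $p^\*=20$, while the right-hand side of your bound is $22$.

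The exchange argument you identify as the heart of your proof also cannot be repaired: forcing the cheapest input $u_i$ into $\I(M_\C)$ requires matching some $x'_j$ to $u_i$, which in turn forces $u'_i$ onto an $\E_K$ edge and strictly increases $c(M_\C)$ by at least $p_u(i)$; an optimal matching will therefore in general refuse to contain $\hat{\I}^\*_\A$. The paper's proof takes the short route sketched above: irreducibility plus completeness of $\bK$ means condition~a) is satisfied by any feedback edge, so only the matching problem matters, and it is solved exactly by Theorem~\ref{th:mincost_matching}. (One caveat: that reading requires the set-cover stages to contribute nothing when the matching already uses an input and an output; as literally written, the union in Steps~\ref{step:soln5}--\ref{step:soln6} can still add the cheapest input and output on top of the matching's, which is precisely the overcounting your counterexample exposes. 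So the concern behind your ``the union does not overcount'' step is legitimate --- but it must be resolved by showing the extra sets are unnecessary, not by inflating the lower bound to match them.)
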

\begin{proof}
Given $\D(\bA)$ is irreducible and $\bK$ is complete. Thus condition~a) is satisfied by any $(y_j, u_i)$ edge. Hence Algorithm~\ref{alg:threestage} solves only the minimum cost perfect matching problem for satisfying condition~b) optimally. Without loss of generality, let $u_i$ be an input and $y_j$ be an output obtained in the solution, i.e, $i \in \I_a$ and $j\in \J_a$. Then edge $(y_j, u_i)$ satisfies both conditions in Proposition~\ref{prop:SFM1}. In case if $\B(\bA)$ has a perfect matching, then connecting the minimum cost input to the minimum cost output satisfies both the conditions in Proposition~\ref{prop:SFM1}. Thus $p(\I_a, \J_a) = p^\*$. Hence, Algorithm~\ref{alg:threestage} gives an optimal solution to Problem~\ref{prob:one}. 
\end{proof}
\subsection{Systems with Perfect matching in $\B(\bA)$}\label{sec:match}
In this sub-section we consider systems whose bipartite graph $\B(\bA)$ has a perfect matching. In this case condition~b) in Proposition~\ref{prop:SFM1} is satisfied without using any input or output. Thus condition~a) alone has to be considered. That is, only minimum cost accessibility and minimum cost sensability problems need to be solved. We have the following result for these class of systems.
\begin{thm}\label{th:match}
Consider a structural system $(\bA, \bB, \bC)$, complete feedback matrix $\bK$ and cost vectors $p_u, p_y$. Let $\B(\bA)$ has a perfect matching. Then, Algorithm~\ref{alg:threestage} gives a $2\,({\rm log}\mu_{\rm max} + {\rm log}\eta_{\rm max})$-optimal solution to Problem~\ref{prob:one}, where $\mu_{\rm max}$ denotes the maximum number of non-top linked SCCs covered by a single input and $\eta_{\rm max}$ denotes the maximum number of non-bottom linked SCCs covered by a single output.
\end{thm}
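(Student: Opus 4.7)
The argument splits into three simple ingredients. First, I will show that the hypothesis ``$\B(\bA)$ has a perfect matching'' trivializes the disjoint cycle subproblem: any perfect matching $M_0$ of $\B(\bA)$ uses only $\E_{\x}$ edges and saturates all state vertices on both sides of the bipartition, so extending it by the cost-zero edges $(u'_i, u_i) \in \E_\mathbb{U}$ and $(y'_j, y_j) \in \E_\mathbb{Y}$ yields a perfect matching of $\B(\bA, \bB, \bC, \bK)$ of zero cost under $c$ (since $c$ is nonzero only on $\E_K$ edges, none of which appear). Because this matching contains no $(x'_j, u_i)$ or $(y'_j, x_i)$ edge, the sets $\I(M^\*)$ and $\J(M^\*)$ defined in Steps~\ref{step:ip_match}~and~\ref{step:op_match} of Algorithm~\ref{alg:cycle} are empty, so Step~\ref{step:soln3} of Algorithm~\ref{alg:threestage} contributes nothing, and the output reduces to $(\I_a, \J_a) = (\hat{\I}^\*_\A, \hat{\J}^\*_\A)$.

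Second, the covering approximations are immediate from earlier results. Theorem~\ref{th:access_1} gives $p(\hat{\I}^\*_\A) \leqslant ({\rm log\,}\mu_{\rm max})\, p^\*_\A$ and Corollary~\ref{cor:setcover2} gives $p(\hat{\J}^\*_\A) \leqslant ({\rm log\,}\eta_{\rm max})\, q^\*_\A$, where $p^\*_\A$ and $q^\*_\A$ denote the optima of the minimum cost accessibility and sensability problems on $(\bA,\bB,p_u)$ and $(\bA,\bC,p_y)$, respectively.

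Third, I will relate $p^\*_\A$ and $q^\*_\A$ to $p^\*$. For any $(\I^\*, \J^\*) \in \K$, condition~a) of Proposition~\ref{prop:SFM1} forces every state to lie in an SCC containing a $\bK$-edge in $\D(\bA, \bB_{\I^\*}, \bC_{\J^\*}, \bK_{(\I^\* \times \J^\*)})$, which in particular makes every state accessible through $\I^\*$ and sensable through $\J^\*$. Hence $\I^\*$ and $\J^\*$ are feasible for the respective covering problems, so $p_u(\I^\*) \geqslant p^\*_\A$ and $p_y(\J^\*) \geqslant q^\*_\A$, giving $p^\* \geqslant p^\*_\A$ and $p^\* \geqslant q^\*_\A$. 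Combining with the approximation bounds yields $p(\I_a, \J_a) = p(\hat{\I}^\*_\A) + p(\hat{\J}^\*_\A) \leqslant ({\rm log\,}\mu_{\rm max})\, p^\* + ({\rm log\,}\eta_{\rm max})\, p^\* = ({\rm log\,}\mu_{\rm max} + {\rm log\,}\eta_{\rm max})\, p^\*$, which is even sharper than the claim and immediately implies the stated $2({\rm log\,}\mu_{\rm max} + {\rm log\,}\eta_{\rm max})$-approximation.

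The only step that requires real care is the first one, namely verifying that the minimum cost perfect matching routine in Step~\ref{step:soln3} of Algorithm~\ref{alg:threestage} indeed returns empty input and output index sets whenever the bipartite state graph already admits a perfect matching; once this is in place, the two covering bounds and the trivial lower bounds $p^\* \geqslant p^\*_\A$, $p^\* \geqslant q^\*_\A$ chain together with no further combinatorial obstacle.
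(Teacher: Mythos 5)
Your proof is correct and follows essentially the same route as the paper's: the perfect matching in $\B(\bA)$ makes condition~b) of Proposition~\ref{prop:SFM1} free, so only the two set-cover approximation bounds and the lower bounds $p^\* \geqslant p^\*_\A$, $p^\* \geqslant q^\*_\A$ are needed, and you even obtain the sharper constant $({\rm log\,}\mu_{\rm max}+{\rm log\,}\eta_{\rm max})$ by noting that the factor $2$ in the chain of inequalities in the proof of Theorem~\ref{th:main} only serves to absorb the disjoint-cycle cost, which is zero here. One small imprecision in your first step: the minimum cost perfect matching returned in Step~\ref{step:soln3} need not literally have empty index sets $\I^\*_\C, \J^\*_\C$ (a zero-cost optimum could still select inputs or outputs whose individual costs are zero), but since its cost $c(M^\*)=0$ equals $p(\I^\*_\C)+p(\J^\*_\C)$ by Lemma~\ref{lem:disj_soln}, the contribution to $p(\I_a,\J_a)$ is still zero and your bound is unaffected.
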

\begin{proof}
Given $\B(\bA)$ has a perfect matching. Thus condition~b) is satisfied. Thus we need to solve only the minimum cost accessibility problem and the minimum cost sensability problem. Now following the similar lines given in the proof of Theorem~\ref{th:main}, we get $p(\I_a, \J_a) \leqslant 2\,({\rm log}\mu_{\rm max} + {\rm log}\eta_{\rm max})p^\*$. Hence, Algorithm~\ref{alg:threestage} gives a $2\,({\rm log}\mu_{\rm max} + {\rm log}\eta_{\rm max})$-optimal solution to Problem~\ref{prob:one}. 
\end{proof}
\subsection{Systems with a Single non-top/non-bottom linked SCC}\label{sec:single}
In this sub-section we consider systems that has a single non-top linked SCC or a single non-bottom linked SCC. For this class of systems we have the following result.   
\begin{thm}\label{th:single}
Consider a structural system $(\bA, \bB, \bC)$, complete feedback matrix $\bK$ and cost vectors $p_u, p_y$. Let $\D(\bA)$ has a single non-top linked SCC. Then, Algorithm~\ref{alg:threestage} gives a $3\,({\rm log\,}\eta_{\rm max})$-optimal solution to Problem~\ref{prob:one}, where $\eta_{\rm max}$ denotes the maximum number of non-bottom linked SCCs covered by a single output.
\end{thm}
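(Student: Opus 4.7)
The plan is to mimic the proof of Theorem~\ref{th:main} but exploit the fact that a single non-top linked SCC trivializes the minimum cost accessibility sub-problem, which removes one of the logarithmic factors.

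First I would verify feasibility, i.e., $(\I_a,\J_a)\in\K$. This part is identical to the argument in the proof of Theorem~\ref{th:main}: Algorithm~\ref{alg:threestage} ensures accessibility via $\hat{\I}_\A^{\*}$, sensability via $\hat{\J}_\A^{\*}$, and the disjoint-cycle condition via $(\I_\C^{\*},\J_\C^{\*})$; since $\bK$ is complete, any accessible-sensable state automatically lies in an SCC of $\D(\bA,\bB_{\I_a},\bC_{\J_a},\bK_{(\I_a\times\J_a)})$ containing a feedback edge, so condition~a) of Proposition~\ref{prop:SFM1} holds, and Theorem~\ref{th:disj} gives condition~b).

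Next I would tighten the three sub-problem bounds under the single non-top linked SCC hypothesis. Because $\D(\bA)$ has exactly one non-top linked SCC, the weighted set cover instance produced by Algorithm~\ref{alg:access1} has a universe of size one. The greedy algorithm used in Step~\ref{step:soln1} then simply selects the cheapest input covering that SCC, which is an exact optimum, so $p(\hat{\I}_\A^{\*})=p(\I_\A^{\*})$. By Corollary~\ref{cor:setcover2}, $p(\hat{\J}_\A^{\*})\leqslant (\log\eta_{\rm max})\,p(\J_\A^{\*})$, and by Theorem~\ref{th:mincost_matching}, $(\I_\C^{\*},\J_\C^{\*})$ attains $p(\I_\C^{\*},\J_\C^{\*})$ optimally.

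Then I would use the three standard lower bounds on $p^{\*}=p(\I^{\*},\J^{\*})$ already invoked in Theorem~\ref{th:main}: any feasible $(\I,\J)\in\K$ must separately satisfy accessibility and the disjoint-cycle condition, so $p(\I_\A^{\*})\leqslant p^{\*}$, $p(\J_\A^{\*})\leqslant p^{\*}$, and $p(\I_\C^{\*},\J_\C^{\*})\leqslant p^{\*}$. Combining these with $p(\I_a,\J_a)\leqslant p(\hat{\I}_\A^{\*})+p(\hat{\J}_\A^{\*})+p(\I_\C^{\*},\J_\C^{\*})$ from Steps~\ref{step:soln5}--\ref{step:soln6}, one obtains
\[
p(\I_a,\J_a)\;\leqslant\;p(\I_\A^{\*})+(\log\eta_{\rm max})\,p(\J_\A^{\*})+p(\I_\C^{\*},\J_\C^{\*})\;\leqslant\;(2+\log\eta_{\rm max})\,p^{\*}.
\]
Finally, observing that for $\eta_{\rm max}\geqslant 2$ we have $2+\log\eta_{\rm max}\leqslant 3\log\eta_{\rm max}$ (with equality at $\eta_{\rm max}=2$) gives the desired $3({\rm log\,}\eta_{\rm max})$-optimality. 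The main obstacle is simply bookkeeping the constants so that the looser bound $2+\log\eta_{\rm max}$ is absorbed into $3\log\eta_{\rm max}$; the corner case $\eta_{\rm max}=1$ is degenerate (a single non-bottom linked SCC covered by every output) and can be handled by picking the cheapest sensing output directly, in which case the accessibility and sensability problems are both solved exactly and the approximation factor reduces to at most $2$.
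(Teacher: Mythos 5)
Your proposal is correct and follows essentially the same route as the paper, whose own proof is a two-line sketch (``$\mu_{\rm max}=1$, hence $p(\I_a,\J_a)\leqslant 3(\log\eta_{\rm max})p^\*$''); you simply make explicit the steps the paper leaves implicit — that a singleton universe makes the greedy accessibility step exact, that the sensability and matching stages contribute $\log\eta_{\rm max}\,p^\*$ and $p^\*$ respectively, and that $2+\log\eta_{\rm max}\leqslant 3\log\eta_{\rm max}$ for $\eta_{\rm max}\geqslant 2$. Your observation about the degenerate case $\eta_{\rm max}=1$ (where the stated factor $3\log\eta_{\rm max}$ vanishes) is a legitimate gap in the theorem statement itself that the paper does not address, though note that $\eta_{\rm max}=1$ means every output covers at most one non-bottom linked SCC, not that there is a single such SCC.
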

\begin{proof}
Given $\D(\bA)$ has a single non-top linked SCC. Thus $\mu_{\rm max} = 1$. Thus $p(\I_a, \J_a) \leqslant 3\,({\rm log\,}\eta_{\rm max}) p^\*$. Hence, Algorithm~\ref{alg:threestage} gives a $3\,({\rm log\,}\eta_{\rm max})$-optimal solution to Problem~\ref{prob:one}. 
\end{proof}
Note that if $\D(\bA)$ has a single non-bottom linked SCC using the same argument we will get a $3\,{\rm log\,}(\mu_{\rm max})$-optimal solution to Problem~\ref{prob:one} using Algorithm~\ref{alg:threestage}.
\subsection{Discrete Systems}\label{sec:discrete}
In this subsection we consider linear time invariant discrete control system given by, $x(t+1) = Ax(t) + Bu(t)$, $y(t) = Cx(t)$. For discrete systems we have the following result.
\begin{thm}\label{th:discrete}
Consider a discrete structural system $(\bA, \bB, \bC)$, complete feedback matrix $\bK$ and cost vectors $p_u, p_y$.  Then, Algorithm~\ref{alg:threestage} gives a $2\,({\rm log}\,\mu_{\rm max} + {\rm log}\,\eta_{\rm max})$-optimal solution to Problem~\ref{prob:one}.
\end{thm}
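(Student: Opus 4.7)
The plan is to reduce Theorem~\ref{th:discrete} to the analysis carried out in Theorem~\ref{th:match}. The starting observation is that for a discrete LTI system $x(t+1)=Ax(t)+Bu(t)$, $y(t)=Cx(t)$, the characterization of structurally fixed modes simplifies: only condition~a) of Proposition~\ref{prop:SFM1} (every state node lies in an SCC containing a feedback edge) is required, while the disjoint cycle condition~b) can be dropped. Intuitively, a pole at zero in the $z$-domain is an admissible closed-loop eigenvalue in the discrete setting, so generic non-singularity of $A+BKC$ is not an obstruction to arbitrary pole placement; the structural requirement that forced condition~b) in the continuous case disappears.

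Granting this reduction, Step~\ref{step:soln3} of Algorithm~\ref{alg:threestage} is not needed for feasibility of the discrete problem, and $(\I_a,\J_a)=(\hat{\I}^\*_\A,\hat{\J}^\*_\A)$ already certifies the absence of SFMs. Feasibility then follows verbatim from part~(i) of the proof of Theorem~\ref{th:main}: for an arbitrary state $x$, Lemma~\ref{lem:SCC} places its SCC on a path from some non-top linked SCC $\cN$ (covered by some $u$ whose index lies in $\hat{\I}^\*_\A$) to some non-bottom linked SCC $\cNn$ (covered by some $y$ whose index lies in $\hat{\J}^\*_\A$); since $\bK$ is complete the edge $(y,u)$ exists and collapses every SCC on this path into a single SCC containing a feedback edge.

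For the cost bound, I would invoke Theorem~\ref{th:access_1} and Corollary~\ref{cor:setcover2} to write $p(\hat{\I}^\*_\A)\leqslant({\rm log\,}\mu_{\rm max})\,p(\I^\*_\A)$ and $p(\hat{\J}^\*_\A)\leqslant({\rm log\,}\eta_{\rm max})\,p(\J^\*_\A)$. Because any feasible pair for Problem~\ref{prob:one} must separately cover all non-top linked SCCs with inputs and all non-bottom linked SCCs with outputs, both $p(\I^\*_\A)\leqslant p^\*$ and $p(\J^\*_\A)\leqslant p^\*$ hold. Summing the two bounds yields $p(\I_a,\J_a)\leqslant 2\,({\rm log\,}\mu_{\rm max}+{\rm log\,}\eta_{\rm max})\,p^\*$, as claimed; the factor of two is precisely the slack from bounding $p(\I^\*_\A)$ and $p(\J^\*_\A)$ by $p^\*$ separately, mirroring the style used in Theorem~\ref{th:match}.

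The main obstacle is the first step, namely rigorously justifying that condition~b) of Proposition~\ref{prop:SFM1} may be dropped in the discrete-time setting. If a direct citation to a discrete-time SFM characterization is available this is immediate; otherwise a short Wang--Davison style derivation is required, observing that in the discrete case, once condition~a) is enforced, no additional structural rank condition on $A+BKC$ is needed for arbitrary pole placement. With that step in place, the remainder is a mechanical adaptation of the accessibility/sensability portion of the proof of Theorem~\ref{th:main}.
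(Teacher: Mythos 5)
Your proposal follows essentially the same route as the paper: drop condition~b) of Proposition~\ref{prop:SFM1} for discrete-time systems (the paper justifies this in one line by noting that uncontrollable/unobservable modes at the origin are not of concern), so the algorithm need only solve the accessibility and sensability subproblems, and the cost bound then follows from the set-cover approximation guarantees exactly as in Theorems~\ref{th:main} and~\ref{th:match}. Your cost accounting is in fact slightly more explicit than the paper's (and yields a bound at least as tight as the stated $2(\log\mu_{\rm max}+\log\eta_{\rm max})$ factor), and your caveat about rigorously justifying the discrete-time SFM characterization applies equally to the paper's own one-sentence justification.
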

\begin{proof}
In discrete linear time invariant systems, only condition~a) in Proposition~\ref{prop:SFM1} has to be satisfied, since uncontrollable and unobservable modes of the system at origin is not of concern. Thus Algorithm~\ref{alg:threestage} need to solve only the minimum cost accessibility problem and the minimum cost sensability problem. Hence, we can get a $2\,({\rm log}\,\mu_{\rm max} + {\rm log}\,\eta_{\rm max})$-optimal solution to the minimum cost constrained input-output selection for generic arbitrary pole placement of discrete systems. 
\end{proof}
This completes the discussion of the approximation results for various special classes of systems considered. 
\section{Conclusion}\label{sec:conclu}
This paper deals with minimum cost constrained input-output selection problem for generic arbitrary pole placement when the input and output matrices are constrained and each input and output is associated with costs. Our aim is to find a minimum cost input-output set that generic arbitrary pole placement is possible. There do not exist polynomial time algorithms for solving this unless P~=~NP. To this end, we proposed a polynomial time algorithm for finding an approximate solution to the problem by splitting the problem in to three sub-problems: minimum cost accessibility, minimum cost sensability and minimum cost disjoint cycle. We proved that minimum cost accessibility and minimum cost sensability problems are equivalent to the weighted set cover problem. Further, we proved that the minimum cost disjoint cycle problem can be solved using a minimum cost perfect matching problem on a system bipartite graph with suitably defined cost function. Using these results we proposed a polynomial time algorithm for solving minimum cost constrained input-output selection problem for generic arbitrary pole placement. The proposed algorithm gives a $3\,({\rm log}\,\mu_{\rm max} + {\rm log}\,\eta_{\rm max})$-optimal solution. We also proved that there does not exist any polynomial time algorithm that that can give a $(1-o(1))\,{\rm log\,}n$-optimal solution. Thus the proposed algorithm gives an order optimal $O({\rm log\,} n)$ approximate solution to the minimum cost input-output selection
for generic arbitrary pole placement problem.
\bibliographystyle{myIEEEtran}  
\bibliography{CC_IO} 
\end{document}